\documentclass[a4paper,11pt]{amsart}
\usepackage{amsthm}
\usepackage{amsmath}
\usepackage{amssymb}
\usepackage{amscd}                 
\usepackage[all]{xy}               
\usepackage{stmaryrd}              
\usepackage{a4wide}
\usepackage[toc,page]{appendix}
\usepackage[utf8]{inputenc}
\usepackage[english]{babel}
\usepackage{booktabs} 
\setlength {\marginparwidth}{2cm}
\usepackage{todonotes}
\usepackage[fixlanguage]{babelbib}
\usepackage{tikz-cd}
\selectbiblanguage{english}

\newtheorem{theorem}{Theorem}

\newtheorem{corollary}{Corollary}
\newtheorem{lemma}{Lemma}

\newtheorem{proposition}{Proposition}

\newtheorem{remark}{Remark}
\newtheorem{example}{Example}

\newcommand{\Gal}{\mathop{\mathrm{Gal} }\nolimits}

\newcommand{\SL}{\mathop{\mathrm{SL} }\nolimits}
\newcommand{\GL}{\mathop{\mathrm{GL} }\nolimits}

\newcommand{\tors}{\mathrm{tors}}
\newcommand{\wild}{\mathrm{wild}}

\title[On the torsion growth in quadratic number fields for elliptic curves]{On the torsion growth in quadratic number fields for elliptic curves defined over the rationals}

\author[S. Arias-de-Reyna]{Sara Arias-de-Reyna}
\address{Departamento de \'Algebra, Facultad de Matem\'aticas and IMUS, Universidad de Sevilla. Avda. Reina Mercedes s/n, 41012 Sevilla, Spain.}
\email{sara\_arias@us.es}
\author[M. Pineda-Mart'in]{Miguel Pineda-Mart\'in}
\address{Departamento de \'Algebra, Facultad de Matem\'aticas and IMUS, Universidad de Sevilla. Avda. Reina Mercedes s/n, 41012 Sevilla, Spain.}
\email{miguelpinedamartin@gmail.com}
\author[J.M. Tornero]{Jos\'e M. Tornero}
\address{Departamento de \'Algebra, Facultad de Matem\'aticas and IMUS, Universidad de Sevilla. Avda. Reina Mercedes s/n, 41012 Sevilla, Spain.}
\email{tornero@us.es}
\thanks{This work was supported by IMUS-Maria de Maeztu grant CEX2024-001517-M - Apoyo a Unidades de Excelencia María de Maeztu, funded by MICIU/AEI/ 10.13039/501100011033, grant PID2024-156912N funded by MICIU/AEI/10.13039/501100011033 and ERDF/EU and grants SOL2024-31596 and SOL2024-31708 from the Plan Propio de Investigación y Transferencia of the University of Sevilla, cofunded by the EU - Ministerio de Hacienda y Función Pública - Fondos Europeos - Junta de Andalucía – Consejería de Universidad, Investigación e Innovación”.}
\subjclass[2010]{Primary: 11G05, 14H52}
\keywords{Elliptic curves, torsion subgroups, number fields, discriminants}
\date{\today}

\begin{document}

\begin{abstract}
Given an elliptic curve defined over the field of rational numbers, it is known how its torsion subgroup may grow when we make a base change to a quadratic number field. In this paper we consider the inverse question: if we have the elliptic curve defined over the rationals and we know how the torsion subgroup grows, what can we say about the field? Our main result gives an explicit relationship between the primes dividing the conductor of the curve and the conductor of the extension as a first approach to a better understanding of this problem.
\end{abstract}

\maketitle

\section{Introduction: The problem}

The following notations and conventions will be used throughout the paper:
\begin{itemize}
    \item We will write $\mathcal{C}_r$ for the cyclic group of order $r$.
    \item As it is customary in the context of elliptic curves, the groups are usually written in additive notation.
    \item Given an elliptic curve $E$ defined over a number field $K$, we will write $E(K)$ for the group of points of $E$ with coordinates on $K$ and $E_\tors(K)$ for its torsion subgroup (including the case $K=\mathbb{Q}$).
    \item We will write $o(P)$ for the order of the point $P$ on the group $E(K)$.
    \item Whenever we consider a quadratic number field written as $K = \mathbb{Q} (\sqrt{d})$ we will assume $d$ is a square-free integer.
    \item Examples are taken from \cite{lmfdb} and labeled accordingly.
\end{itemize}

\ 

Let $E/\mathbb{Q}$ be an elliptic curve defined over $\mathbb{Q}$. Let $\ell$ be a prime number and let us write:
\begin{itemize}
    \item $E[\ell]$ for the group of $\ell$-torsion points on $E(\overline{\mathbb{Q}})$, where $\overline{\mathbb{Q}}$ denotes an algebraic closure of $\mathbb{Q}$.
    \item $\mathbb{Q}(E[\ell])$ for the extension generated by the coordinates of the points of $E[\ell]$. 
    \item $E(K)[\ell]$ for the group of $K$-rational $\ell$-torsion points for every field $K$ (including the case $K = \mathbb{Q}$). 
\end{itemize} 

This paper deals with properties of the torsion subgroup of elliptic curves defined over the rationals under quadratic field extensions. Let then $E/\mathbb{Q}$ be an elliptic curve, $K$ a quadratic number field. The first papers addressing the comparison between $E_\tors(\mathbb{Q})$ and $E_\tors(K)$ were \cite{GonTorI, GonTorII, Najman}. In particular, \cite[Theorem 2]{GonTorI} states the following:

\begin{theorem}\label{GT}
With the previous notations, we have the following table:
$$
\renewcommand\arraystretch{1.3}
\begin{array}{|c|l|}
\hline
E_\tors(\mathbb{Q}) & \text{Groups that can appear as } E_\tors(K) \\
\hline
\mathcal{C}_1 & \left\{ \mathcal{C}_1\,,\, \mathcal{C}_3 \,,\, \mathcal{C}_5\,,\, \mathcal{C}_7\,,\, \mathcal{C}_9 \right\} \\
\hline
\mathcal{C}_2 & \left\{ \mathcal{C}_2\,,\, \mathcal{C}_4 \,,\, \mathcal{C}_6\,,\, \mathcal{C}_8\,,\, \mathcal{C}_{10}\,,\, \mathcal{C}_{12}\,,\, \mathcal{C}_{16}\,,\, \mathcal{C}_2 \times \mathcal{C}_{2}\,,\, \mathcal{C}_2 \times \mathcal{C}_{6}\,,\, \mathcal{C}_2 \times \mathcal{C}_{10} \right\} \\ 
\hline
\mathcal{C}_3 & \left\{ \mathcal{C}_3, \; \mathcal{C}_{15}, \; \mathcal{C}_3 \times \mathcal{C}_3 \right\} \\
\hline
\mathcal{C}_4 & \left\{ \mathcal{C}_4 \,,\,\mathcal{C}_8\,,\, \mathcal{C}_{12}\,,\, \mathcal{C}_2 \times \mathcal{C}_{4}\,,\, \mathcal{C}_2 \times \mathcal{C}_{8}\,,\, \mathcal{C}_2 \times \mathcal{C}_{12}\,,\, \mathcal{C}_4 \times \mathcal{C}_4\right\} \\
\hline
\mathcal{C}_5 & \left\{ \mathcal{C}_5, \; \mathcal{C}_{15} \right\} \\
\hline
\mathcal{C}_6& \left\{ \mathcal{C}_6, \; \mathcal{C}_{12}, \; \mathcal{C}_2 \times \mathcal{C}_6, \; \mathcal{C}_3 \times \mathcal{C}_6 \right\} \\
\hline
\mathcal{C}_7 & \left\{ \mathcal{C}_7 \right\} \\
\hline
\mathcal{C}_8 & \left\{ \mathcal{C}_8, \; \mathcal{C}_{16}, \; \mathcal{C}_2 \times \mathcal{C}_8 \right\} \\
\hline
\mathcal{C}_9 & \left\{ \mathcal{C}_9 \right\} \\
\hline
\mathcal{C}_{10} & \left\{ \mathcal{C}_{10}, \; \mathcal{C}_2 \times \mathcal{C}_{10} \right\} \\
\hline
\mathcal{C}_{12} & \left\{ \mathcal{C}_{12}, \; \mathcal{C}_2 \times \mathcal{C}_{12} \right\} \\
\hline
\mathcal{C}_2 \times \mathcal{C}_2 & \left\{ \mathcal{C}_2 \times \mathcal{C}_{2}\,,\, \mathcal{C}_2 \times \mathcal{C}_{4}\,,\, \mathcal{C}_2 \times \mathcal{C}_{6}\,,\, \mathcal{C}_2 \times \mathcal{C}_{8}\,,\,  \mathcal{C}_2 \times \mathcal{C}_{12}\right\} \\
\hline
\mathcal{C}_2 \times \mathcal{C}_4 & \left\{ \mathcal{C}_2 \times \mathcal{C}_4, \; \mathcal{C}_2 \times \mathcal{C}_8, \; \mathcal{C}_4 \times \mathcal{C}_4 \right\} \\
\hline
\mathcal{C}_2 \times \mathcal{C}_6 & \left\{ \mathcal{C}_2 \times \mathcal{C}_6, \mathcal{C}_2 \times \mathcal{C}_{12} \right\} \\
\hline
\mathcal{C}_2 \times \mathcal{C}_8 & \left\{ \mathcal{C}_2 \times \mathcal{C}_8 \right\} \\
\hline
\end{array}
$$
\end{theorem}

\ 

In the same reference, three further problems were stated. Problems 1 and 3 from this list were solved in \cite{GonTorII, Najman} independently. The origin of our research was trying to answer the problem originally stated in \cite[Section 4]{GonTorI} as Problem 2.

\ 

\noindent \textbf{Problem:} Is there a precise (and easy) description of which are the possible quadratic number fields $K = \mathbb{Q}(\sqrt{d})$ with $E_\tors(\mathbb{Q}) \neq E_\tors(K)$, ideally in terms of some invariant(s) of the curve?

\ 

The extensive calculations from \cite{GonTorII} were the starting point of our research, as we tried to prove conditions on the integer $d$ (alternatively, on the field $K$), assuming we do have growth of torsion subgroups. More specifically, we were interested on sieving the possible prime divisors of $d$ from invariants of $E$.

\ 

Let now $E/\mathbb{Q}$ be an elliptic curve with conductor $N_E$ and $K = \mathbb{Q} (\sqrt{d})$ a quadratic number field with $E_\tors(\mathbb{Q}) \neq E_\tors(K)$. Assume $P \in E_\tors(K) \setminus E_\tors(\mathbb{Q})$ with $o(P)=n$. We know from \cite{KenMom, Kamienny} the primes $\ell$ which can divide $n$ are $\{ 2,3,5,7 \}$.

\ 

Since $P\in E[n]\cap E(K)$, we have that the coordinates of $P$ belong to $K \cap \mathbb{Q}(E[n])$. But $K \cap \mathbb{Q}(E[n])$ must be either $\mathbb{Q}$ or $K$, since $K/\mathbb{Q}$ is a quadratic extension. As we are assuming that $P \notin E(\mathbb{Q})$, it follows that $K \subset \mathbb{Q}(E[n])$. On the other hand, we know from the Neron--Ogg--Shafarevich Criterion that $\mathbb{Q}(E[n])/\mathbb{Q}$ only ramifies at primes dividing $N_E$ or $n$ \cite[Theorem 7.1]{Silverman}. Therefore, we can state:

\begin{proposition}\label{prop:1}
Let $E/\mathbb{Q}$ be an elliptic curve. If $K$ is a quadratic number field such that $E_\tors(\mathbb{Q}) \not= E_\tors(K)$, then the primes ramifying in $K$ belong to the set $\{2, 3, 5, 7\} \cup \{p \; : \; p\vert N_E\}$.
\end{proposition}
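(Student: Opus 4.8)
The plan is to witness the strict inclusion $E_\tors(\mathbb{Q}) \subsetneq E_\tors(K)$ by a single point, show that this point forces $K$ to sit inside a suitable division field of $E$, and then read off the ramification of $K$ from that of the division field via the N\'eron--Ogg--Shafarevich criterion.

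Concretely, first I would pick a point $P \in E_\tors(K) \setminus E_\tors(\mathbb{Q})$ and let $n$ be its order. The one external input I would invoke at the outset is the classification of the torsion of elliptic curves over quadratic fields (Kamienny, Kenku--Momose), sharpened by the hypothesis that $E$ is defined over $\mathbb{Q}$: it guarantees that every prime dividing $n$ lies in $\{2,3,5,7\}$. Then, since $P$ is an $n$-torsion point, its coordinates lie in $\mathbb{Q}(E[n])$ by the very definition of the division field; since $P$ is moreover $K$-rational, those same coordinates lie in $K$, so $\mathbb{Q}(P) \subseteq K \cap \mathbb{Q}(E[n])$. Because $P \notin E(\mathbb{Q})$, the field $\mathbb{Q}(P)$ strictly contains $\mathbb{Q}$, and being contained in the quadratic field $K$ it must equal $K$; hence $K \subseteq \mathbb{Q}(E[n])$.

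To conclude, I would apply \cite[Thm. 7.1]{Silverman}: the extension $\mathbb{Q}(E[n])/\mathbb{Q}$ is unramified at every rational prime not dividing $n N_E$. A prime that ramifies in the subextension $K/\mathbb{Q}$ must, by multiplicativity of ramification indices in towers, ramify in $\mathbb{Q}(E[n])/\mathbb{Q}$ as well; therefore it divides $n N_E$, and so it either divides $N_E$ or lies in $\{2,3,5,7\}$ by the previous step. This is precisely the claimed statement.

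As for where the difficulty lies: the field-theoretic steps --- that a subextension of a quadratic extension is either trivial or everything, and that ramification can only decrease on passing to a subextension --- are purely formal, and N\'eron--Ogg--Shafarevich is imported as a standard tool. The genuinely substantive ingredient, which I would treat here as a black box, is the uniform control on the primes that can divide the order of a torsion point of an elliptic curve over a quadratic field; this rests on the deep classification results \cite{KenMom, Kamienny}, together with the observation that $E$ being defined over $\mathbb{Q}$ rules out the primes $11$ and $13$ that can otherwise occur over quadratic fields.
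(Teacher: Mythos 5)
Your proposal is correct and follows essentially the same route as the paper: choose a witness $P\in E_\tors(K)\setminus E_\tors(\mathbb{Q})$ of order $n$, invoke the classification of torsion over quadratic fields to force the prime divisors of $n$ into $\{2,3,5,7\}$, deduce $K\subseteq \mathbb{Q}(E[n])$ from the dichotomy for $K\cap\mathbb{Q}(E[n])$, and finish with N\'eron--Ogg--Shafarevich plus multiplicativity of ramification in towers. No gaps; this matches the paper's argument step for step.
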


\begin{remark}\label{rem:1}
Under the hypothesis of the proposition, if $E$ has good reduction at $p \in \{2,3,5,7\}$ (i.e. $p \nmid N_E$), then $p$ ramifying in $K$ implies $E_\tors(\mathbb{Q})[p] \neq E_\tors(K)[p]$.
\end{remark}

Let us now write $K = \mathbb{Q}(\sqrt{d})$. Since every prime dividing $d$ ramifies in $K$, we have the following version of the previous proposition, more appropriate for our goals.

\begin{proposition}\label{prop:2}
Let $E/\mathbb{Q}$ be an elliptic curve. If $K$ is a quadratic number field such that $E_\tors(\mathbb{Q}) \not= E_\tors(K)$, then we can write $K=\mathbb{Q}(\sqrt{d})$, where the primes dividing $d$ belong to the set $\{2, 3, 5, 7\} \cup \{p: p\vert N_E\}$.
\end{proposition}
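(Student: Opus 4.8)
The plan is to deduce this statement directly from the previous proposition together with the elementary description of ramification in quadratic fields. Any quadratic number field $K$ can be written as $K = \mathbb{Q}(\sqrt{d})$ with $d \in \mathbb{Z} \setminus \{0,1\}$ square-free, so the only thing that needs to be checked is that every rational prime $p$ dividing such a $d$ ramifies in $K/\mathbb{Q}$; once this is established, the hypothesis $E_\tors(\mathbb{Q}) \neq E_\tors(K)$ lets us apply the previous proposition to conclude that $p \in \{2,3,5,7\} \cup \{p : p \mid N_E\}$.

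For the ramification claim I would recall the standard fact that the discriminant $d_K$ of $K = \mathbb{Q}(\sqrt{d})$ equals $d$ when $d \equiv 1 \pmod 4$ and equals $4d$ when $d \equiv 2,3 \pmod 4$, and that a rational prime ramifies in $K$ if and only if it divides $d_K$. In both cases every prime divisor of $d$ divides $d_K$: for an odd prime $p \mid d$ this is immediate since $p \mid d \mid d_K$, and if $2 \mid d$ then, $d$ being square-free, $d \equiv 2 \pmod 4$, so $d_K = 4d$ and $2 \mid d_K$. Hence $p \mid d$ implies $p$ ramifies in $K$.

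Combining the two ingredients: given $E/\mathbb{Q}$ and a quadratic field $K = \mathbb{Q}(\sqrt{d})$ with $d$ square-free and $E_\tors(\mathbb{Q}) \neq E_\tors(K)$, any prime $p \mid d$ ramifies in $K$, so by the previous proposition $p \in \{2,3,5,7\} \cup \{p : p \mid N_E\}$, which is precisely the assertion.

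I do not expect any genuine obstacle here: this statement is essentially a reformulation of the preceding proposition adapted to the normalized presentation $K = \mathbb{Q}(\sqrt{d})$ with $d$ square-free, and the only input beyond that proposition is the well-known (and entirely routine) behaviour of the prime $2$ in quadratic extensions.
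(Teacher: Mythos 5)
Your proposal is correct and follows exactly the route the paper takes: write $K=\mathbb{Q}(\sqrt{d})$ with $d$ square-free, note that every prime dividing $d$ ramifies in $K$, and then invoke the preceding proposition on ramified primes. The paper simply asserts the ramification fact without the discriminant computation you supply, but the argument is the same.
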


\ 

A first property we looked into after this was the following natural sequel: If $p \in \mathbb{Z}$ is a prime such that $p|d$, can we always say $p|N_E$? As stated before, under these circumstances, either $p|N_E$ or $E_\tors(\mathbb{Q})[p] \neq E_\tors(K)[p]$, so it is natural to look at points defined over $K$ of prime order. Our strategy of choice for that study has been the so--called mod $\ell$ Galois representations, which we review now briefly.

\ 

Let $\ell$ be a prime integer. As it is well--known \cite{Serre}, the action of the absolute Galois group $\mathrm{G}_\mathbb{Q}=\Gal(\overline{\mathbb{Q}}/\mathbb{Q})$ on $E[\ell]$ defines a mod $\ell$ Galois representation
$$
\overline{\rho}_{E,\ell}:\mathrm{G}_\mathbb{Q} \longrightarrow \mathrm{Aut}(E[\ell]) \simeq \GL_2(\mathbb{F}_\ell).
$$

The extension $\mathbb{Q} (E[\ell])/ \mathbb{Q}$ is Galois, with $\Gal(\mathbb{Q}(E[\ell])/ \mathbb{Q}) \simeq \overline{\rho}_{E,\ell}(\mathrm{G}_\mathbb{Q})$. In this context, therefore, the subgroup lattice of $\GL_2(\mathbb{F}_\ell)$ offers relevant information about the field extensions we are interested in.

\ 

Fix then $E/\mathbb{Q}$ an elliptic curve, $K=\mathbb{Q}(\sqrt{d})$ a quadratic number field such that $E_\tors(\mathbb{Q}) \not= E_\tors(K)$ and $\ell$ a prime such that $\ell \nmid N_E$. 

We will study, case by case, how the situation unfolds for the possible primes $\ell \in \{ 2,3,5,7 \}$ separatedly. 

\ 

\noindent \boxed{\textbf{The case $\ell=2$.}} (Sections 2 \& 3). In this case we were able to prove the following.

\begin{theorem}\label{l2}
Let $E/\mathbb{Q}$ be an elliptic curve, and $K$ a quadratic number field such that $E_\tors(\mathbb{Q}) \not= E_\tors(K)$. If $2\vert d$, then $2\vert N_E$.
\end{theorem}

The proof of this theorem encompasses Propositions \ref{2strict}, \ref{N=4}, \ref{N=8} and \ref{N=16}, which also yield some collateral results as Corollary \ref{cor16}. It is the most complicated result, and it must be broken, to begin with, in two parts, illustrating two different situations for the torsion growth.

\ 

We know a new torsion point must appear when moving from $\mathbb{Q}$ to $K$. It may happen that some new torsion point $P\in E(K)$ appears with no nontrivial multiple of $P$ belonging to $E(\mathbb{Q})$ (this will be called the \textit{strict} case). Or it may happen that, for each new torsion point $P\in E(K)$, there is a nontrivial multiple $mP$ belonging to $E(\mathbb{Q})$ (this will be the \textit{mixed} case). 

For example, if we have $E_\tors(\mathbb{Q})\simeq \mathcal{C}_2$ and $E_\tors(K)\simeq \mathcal{C}_2 \times \mathcal{C}_2$, it means that a new point of order $2$ has appeared, and we would be in the first case. However, if $E_\tors(K)\simeq \mathcal{C}_4$, it means that a point of $4$-torsion, $P$, has appeared such that $2P\in E_\tors(\mathbb{Q})$. This would be an example of the second case. 

Note that, in the strict cases, we can always assume that the order of $P$ is a prime number $\ell$ since, if such a point $P$ appears, some multiple of it will have prime order, and under our assumption it can not be contained in $E_\tors(\mathbb{Q})$. 

The mixed case will be the most involved and it is distinctive of the $\ell=2$ case. Indeed, this situation can in principle happen with points of any prime order in $\{2, 3, 5, 7\}$ but, under our hypothesis, at least one new torsion point in this case must have an order that is a multiple of $2$, as shown in Theorem \ref{GT}.

\ 

\noindent \boxed{\textbf{The case $\ell=3$.}} (Section 4). This prime has a very specific behaviour as it is the only one who allows \textit{unexpected} points to appear. By that we mean we know for sure that $3|d$ does \textit{not} imply $3|N_E$. An example of this is the curve 19.a2, which verifies
$$
N_E = 19, \qquad E_\tors(\mathbb{Q}) = \mathcal{C}_1, \qquad E_\tors \big( \mathbb{Q}(\sqrt{-3}) \big) = \mathcal{C}_3.
$$

Therefore what we aim for here is a more complete understanding of the phenomenon, and give conditions under which we can be sure that $3|d$ implies $3|N_E$. This is still a work in progress but we have been able to prove results on this regard, like the following ones (appearing in Section 4 as Propositions \ref{prop:C1_to_C9} and \ref{caso N=3}):

\

\noindent\textbf{Proposition \ref{prop:C1_to_C9}.} \textit{Let $E/\mathbb{Q}$ be an elliptic curve such that $E_\tors(\mathbb{Q})$ is trivial and $E_\tors(K) =  \mathcal{C}_9$, with $K$ a quadratic field. Then, $E$ has bad reduction at 3.}

\

\noindent\textbf{Proposition \ref{caso N=3}.} \textit{Let $E/\mathbb{Q}$ be an elliptic curve and $K=\mathbb{Q}(\sqrt{d})$ be a quadratic extension such that there exists a point $P\in E[3]$ such that $P\in E(K)\setminus E(\mathbb{Q})$. Let us assume $3|d$ and $E$ has good reduction at 3 ($3 \nmid N_E$), then}
$$
E_\tors(K) =   E_\tors(\mathbb{Q})\times \mathcal{C}_3.
$$

\ 

\noindent \boxed{\textbf{The cases $\ell=5,7$.}} (Section 5). As in the $\ell=2$ case we have here:

\begin{theorem}\label{l57}
Let $E/\mathbb{Q}$ be an elliptic curve, $K$ a quadratic number field such that $E_\tors(\mathbb{Q}) \neq E_\tors(K)$. If $p=5,7$ ramifies in $K$, then $p \vert N_E$.
\end{theorem}

\

This result was already known and it can be proved from \cite[Theorem 1]{Olson} (using $d$--twists) and also from \cite[Theorem 1.5]{Melistas} (using reduction types in abelian varieties over more general base fields). Nevertheless we have applied the mod $\ell$ Galois representation techniques in order to (hopefully) find a new proof which would allow us to better understand the phenomenon. In particular, these two primes can share a common approach via the study of the inertia group of a prime $\Lambda\vert \ell$ of $\mathbb{Q}(E[\ell])$. Essentially, the size of the groups involved will render impossible the existence of points of order $\ell$ in $E_\tors(K)$, although it will need to be proved in a case--by--case argument, depending on the reduction type of $E$ at $\ell$.

Besides succeeding in finding an alternative proof, our arguments can be used to prove the following result, which cannot be derived from \cite{Olson,Melistas} and is, in fact, a stronger statement than Theorem \ref{l57}:

\begin{theorem}\label{addred}
   Let $E/\mathbb{Q}$ be an elliptic curve. Assume $K$ is a quadratic number field such that  there exists $P\in E(K)[\ell]\setminus E(\mathbb{Q})$, with $\ell\geq 3$ prime.  
    \begin{enumerate}
        \item For every prime $p\neq \ell$ that ramifies in $K$, $E$ has additive reduction at $p$, i.e. $p^2|N_E$.
        \item If $p=\ell>3$ ramifies in $K$, $E$ has additive reduction at $p$, i.e. $p^2|N_E$.
    \end{enumerate}
\end{theorem}

Section 6, finally, is devoted to conclusions and final remarks, including comments on future directions of our work. To make the reading of the paper a little easier, we have included an appendix where we collect some information on subgroups of $\mathrm{GL}_2(\mathbb{F}_5)$ and $\mathrm{GL}_2(\mathbb{F}_7)$ which are needed in Section 5.

\

The authors want to thank the anonymous referees for their insights which actually allowed us to realise Theorem \ref{addred} and which have meant an important contribution to the final form of this paper, often by shortening arguments. They also want to thank E. González--Jiménez and F. Najman for their conversations on the topic and support during the preparation of the paper. Finally, thanks are also due to M. Melistas who pointed us out his contributions on the matter.

\section{The prime $\ell=2$: The strict case}\label{sec:2_strict}

Let us suppose that $E/\mathbb{Q}$ is an elliptic curve and $K/\mathbb{Q}$ is a quadratic extension such that there exists a point $P\in E[2]$ with $P\in E(K)\setminus E(\mathbb{Q})$.

We then have the following diagram:
\begin{equation*}
 \xymatrix{\mathbb{Q}(E[2]) \ar@{-}[d] \ar@{-}@/^2pc/[dd]^{\overline{\rho}_{E,2}(G_{\mathbb{Q}})} \ar@{-}[d] \ar@{-}@/_2pc/[d]_{H}\\ K  \ar@{-}[d]^2 \\ \mathbb{Q}}
\end{equation*}

We will have that $\overline{\rho}_{E, 2}(G_{\mathbb{Q}})$ is a subgroup of $\mathrm{GL}_2(\mathbb{F}_2)$ which has a subgroup $H$ of index 2, which does have in turn a fixed point, different from $(0, 0)^t$, that is not a fixed point of $\overline{\rho}_{E, 2}(G_{\mathbb{Q}})$.

Note that $\mathrm{GL}_2(\mathbb{F}_2)$ has the following subgroups (up to conjugation) \cite{Zywina}:
$$
G_1 = \left\{ \mathrm{Id} \right\}, \qquad 
G_2 = \left\{ \mathrm{Id}, \begin{pmatrix} 1 & 1\\ 0 & 1\end{pmatrix} \right\}, \qquad
G_3 = \left\{ \mathrm{Id}, \begin{pmatrix} 1 & 1\\ 1 & 0\end{pmatrix}, \begin{pmatrix} 0 & 1\\ 1 & 1\end{pmatrix} \right\} 
$$
$$
G_4 = \mathrm{GL}_2(\mathbb{F}_2) = \left\{ \mathrm{Id}, \begin{pmatrix} 1 & 1\\ 0 & 1\end{pmatrix} \begin{pmatrix} 1 & 1\\ 1 & 0\end{pmatrix}, \begin{pmatrix} 0 & 1\\ 1 & 1\end{pmatrix}, \begin{pmatrix} 1 & 0\\ 1 & 1\end{pmatrix}, \begin{pmatrix} 0 & 1\\ 1 & 0\end{pmatrix} \right\}.
$$

\ 

Let us examine each case one by one:

\begin{enumerate}
\item $\overline{\rho}_{E, 2}(G_{\mathbb{Q}})=G_1$: The group $G_1$ has no subgroups of index 2.
\item $\overline{\rho}_{E, 2}(G_{\mathbb{Q}})=G_2$: We already have a point of $2$-torsion in $E(\mathbb{Q})$ (the point corresponding to $(1, 0)^t$), and in $E(K)$, all of the $2$-torsion is present. This case can occur (see examples).
\item $\overline{\rho}_{E, 2}(G_{\mathbb{Q}})=G_3$: In this case, $\mathbb{Q}(E[2])/\mathbb{Q}$ has order $3$, and therefore cannot contain a quadratic subextension.
\item $\overline{\rho}_{E, 2}(G_{\mathbb{Q}})=G_4$: It has a subgroup of index $2$, specifically $G_3$. But since $G_3$ has no fixed points, no torsion is added.
\end{enumerate}

\begin{example}
Let us see examples of curves with good reduction at $2$, but in such a way that $2$-torsion is added over a field that ramifies at the prime $2$:

\begin{itemize}
\item Curve $15.a3$: This curve has Galois group $\Gal(\mathbb{Q}(E[2])/\mathbb{Q})\simeq G_2$, and $E_\tors(\mathbb{Q})\simeq \mathcal{C}_2$. Over the field $\mathbb{Q}(\sqrt{-5})$, it obtains torsion $\mathcal{C}_2\times \mathcal{C}_2$.

\item Curve $17.a3$: This curve has Galois group $\Gal(\mathbb{Q}(E[2])/\mathbb{Q})\simeq G_2$, and $E_\tors(\mathbb{Q})\simeq \mathcal{C}_4$. Over the field $\mathbb{Q}(i)$, it obtains torsion $\mathcal{C}_2\times \mathcal{C}_4$.

\item Curve $15.a8$: This curve has Galois group $\Gal(\mathbb{Q}(E[2])/\mathbb{Q})\simeq G_2$, and $E_\tors(\mathbb{Q})\simeq \mathcal{C}_8$. Over the field $\mathbb{Q}(i)$, it obtains torsion $\mathcal{C}_2\times \mathcal{C}_8$.
\end{itemize}
    \end{example}

We must then ask ourselves now, in order to explore our conjecture, if it is possible for the torsion of an elliptic curve, with good reduction at $2$, to grow in such a way over a quadratic field $\mathbb{Q}(\sqrt{d})$, where $d$ is square-free and $2\vert d$. 

\

The rest of the section is devoted to the proof of the following:

\begin{proposition}\label{2strict}
    Let $E/\mathbb{Q}$ be an elliptic curve and $K=\mathbb{Q}(\sqrt{d})$ a quadratic extension. Suppose that there exists a point $P\in E(K)[2]\setminus E(\mathbb{Q})[2]$. Then, if $2\vert d$, $E$ has bad reduction at $2$.
\end{proposition}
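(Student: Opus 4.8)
The plan is to work with an explicit Weierstrass model adapted to the rational $2$-torsion point and then force a contradiction by comparing $2$-adic valuations; observe that the N\'eron--Ogg--Shafarevich criterion used earlier is of no help here, since now $\ell=2$ is the residue characteristic, so a genuinely different argument is needed. First I would record that, under the hypotheses, $E$ has a $\mathbb{Q}$-rational point of order $2$. Indeed, $E(\mathbb{Q})[2]$ cannot equal $E[2]$ (because $P\notin E(\mathbb{Q})$), and it cannot be trivial either: otherwise the $2$-division polynomial of $E$ would be an irreducible cubic over $\mathbb{Q}$, so the $x$-coordinate of $P$ would have degree $3$ over $\mathbb{Q}$, contradicting that it lies in the quadratic field $K$ but not in $\mathbb{Q}$ (note that $P\notin E(\mathbb{Q})$ forces $x(P)\notin\mathbb{Q}$, since the $y$-coordinate of a $2$-torsion point is a rational function of its $x$-coordinate). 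Equivalently, this is exactly what the case analysis above gives, namely $\overline{\rho}_{E,2}(G_\mathbb{Q})\simeq G_2$. Consequently, after completing the square, translating the rational $2$-torsion point to the origin, and scaling to clear denominators, $E$ admits a model
\[
y^2 = x^3 + ax^2 + bx, \qquad a,b\in\mathbb{Z},\quad b\neq 0,\quad a^2-4b\neq 0,
\]
where $b\neq 0$ and $a^2-4b\neq 0$ because $E$ is nonsingular. Its $2$-torsion points are $(0,0)$ and the two points with $x$-coordinate $\tfrac{-a\pm\sqrt{a^2-4b}}{2}$, so $K=\mathbb{Q}(E[2])=\mathbb{Q}\bigl(\sqrt{a^2-4b}\bigr)$, and therefore $d$ equals the squarefree part of $a^2-4b$.

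Next I would compute the discriminant of this model, $\Delta = 16\,b^2\,(a^2-4b)$, and invoke the standard fact that the $2$-adic valuation of the discriminant of any Weierstrass model of $E$ over $\mathbb{Q}$ is congruent modulo $12$ to that of a minimal model. If $E$ had good reduction at $2$, the minimal discriminant would be a $2$-adic unit, so $v_2(\Delta)\equiv 0 \pmod{12}$; in particular $v_2(\Delta)$ would be even. On the other hand,
\[
v_2(\Delta) = 4 + 2\,v_2(b) + v_2(a^2-4b),
\]
so $v_2(a^2-4b)$ would have to be even, which says precisely that the squarefree part $d$ of $a^2-4b$ is odd, i.e. $2\nmid d$. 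This is the contrapositive of the Proposition.

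The only steps requiring (routine) care are the bookkeeping ones: producing the displayed integral model with $b\neq 0$ and $a^2-4b\neq 0$, verifying the formula $\Delta=16b^2(a^2-4b)$, and justifying the congruence $v_2(\Delta)\equiv v_2(\Delta_{\min})\pmod{12}$. I do not anticipate a real obstacle: the whole content of the statement is the parity clash between the divisibility $12\mid v_2(\Delta)$ imposed by good reduction at $2$ and the odd valuation $v_2(a^2-4b)$ forced by $2\mid d$.
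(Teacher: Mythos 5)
Your proof is correct, and it takes a genuinely different and substantially shorter route than the one in the paper. Both arguments start from the same forced configuration (exactly one rational point of order $2$, i.e.\ the image of $\overline{\rho}_{E,2}$ is the group $G_2$ of the case analysis above), and both ultimately exploit the same parity obstruction: the squarefree $d$ with $2\mid d$ contributes an odd $2$-adic valuation to the discriminant. The difference is in how that obstruction is extracted. The paper stays with the minimal model at $2$, factors the $2$-division polynomial as $4(x-\alpha)(x-\beta)(x-c)$ with $\alpha,\beta$ conjugate in $\mathbb{Q}(\sqrt{d})$, proves a priori bounds $v_2(\alpha),v_2(c)\geq -2$ from integrality of the coefficients, and runs a three-way case analysis on $v_2(c)$ to show that $v_2(\Delta)=0$ would force the (necessarily odd) quantity $v_2(b^2d)$ to be even. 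You instead move to the model $y^2=x^3+ax^2+bx$ adapted to the rational $2$-torsion point, read off $\Delta=16\,b^2(a^2-4b)$ and $K=\mathbb{Q}\bigl(\sqrt{a^2-4b}\bigr)$, and use the invariance of $v_2(\Delta)$ modulo $12$ under change of Weierstrass model: good reduction at $2$ makes $v_2(\Delta)$ even, hence $v_2(a^2-4b)$ even, hence $2\nmid d$. This dispenses with minimality, the integrality bookkeeping and the case analysis; the only facts to verify are the discriminant formula and the relation $\Delta=u^{12}\Delta'$, both routine, and your argument does not even require $a,b\in\mathbb{Z}$. Your preliminary reduction to the existence of a single rational $2$-torsion point (via irreducibility of the cubic when $E(\mathbb{Q})[2]$ is trivial) is also sound and matches what the paper obtains group-theoretically. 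If anything, your version makes the mechanism of the proof --- a parity clash between $12\mid v_2(\Delta)$ and $v_2(d)=1$ --- more transparent.
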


\begin{proof}
Consider a minimal model for $E$ at $2$:
\begin{equation}\label{eq:minimalmodel}
y^2 + a_1xy + a_3y=x^3 + a_2x^2 + a_4x + a_6,
\end{equation}
and let us assume $E$ has good reduction at the prime $2$. Therefore the discriminant of the elliptic curve,
$$
\Delta= -b_2^2b_8  - 8b_4^3-27b_6^2 + 9b_2b_4b_6,
$$
is an odd number. The expressions for the integers $b_i$ in terms of the integers $a_i$ can be found, for example, in \cite[Chapter III]{Silverman}. 

Now, to calculate the $2$-torsion points, we use the $2$-division polynomial $\psi_2 = 2y + a_1x + a_3$; substituting 
$$
y \longmapsto \frac{1}{2}(-a_1x-a_3)
$$ 
into equation \eqref{eq:minimalmodel}. We obtain the equation with integral coefficients
\begin{equation}\label{eq:integercoeff}
0 = 4x^3 + b_2x^2 + 2b_4x + b_6.
\end{equation}

The $x$--coordinates of the nontrivial $2$-torsion points are the three roots of this polynomial, say $\alpha$, $\beta$, and $\gamma$, and we have $\mathbb{Q}(E[2])=\mathbb{Q}(\alpha, \beta, \gamma)$.

If there is already a $2$-torsion point over $\mathbb{Q}$, and we know that over a quadratic extension $K/\mathbb{Q}$ there is another torsion point, then we have precisely one single rational $2$-torsion point. This means that the above polynomial factors as
$$
4x^3 + b_2x^2 + 2b_4x + b_6 = 4(x-\alpha)(x-\beta)(x-c),
$$
where $c\in \mathbb{Q}$ and $\alpha, \beta$ are conjugate elements in some quadratic field $\mathbb{Q}(\sqrt{d})$, say
$$
\alpha = a + b\sqrt{d}, \qquad \beta = a-b\sqrt{d}.
$$

We are assuming $\mathbb{Q}(\sqrt{d})$ with $2|d$, therefore the ring of integers of $\mathbb{Q}(\sqrt{d})$ is $\mathbb{Z}[\sqrt{d}]$. Denote by $v_2$ the $2$-adic valuation in $\mathbb{Z}[\sqrt{d}]$, normalized such that $v_2(2)=1$, and take 
$$
v=v_2(\alpha)=v_2(\beta), \quad w=v_2(c).
$$

The discriminant of the polynomial $4x^3 + b_2x^2 + 2b_4x + b_6$ is 
$$
16 \Delta = 4^4(\alpha- \beta)^2(\alpha - c)^2(\beta- c)^2,
$$
and therefore,
$$
\Delta=4^2(\alpha- \beta)^2(\alpha - c)^2(\beta- c)^2,
$$
which must be an odd number, as we have good reduction at $2$.

\ 

Let us show now that $v, w \geq -2$, using the fact that Equation (\ref{eq:integercoeff}) has integral coefficients. 

\ 

First, if $v=w$, then the independent term of the polyomial $4(x-\alpha)(x-\beta)(x-c)$ has $2$-adic valuation $v_2(4\alpha\beta c)=2 + v + v + v\geq 0$, thus $v\geq -2/3$. In particular, $v\geq -2$. 

\ 

Assume now that $v\not=w$. If we look at the coefficient of the term of degree 1 of this polynomial, $4(c\alpha + c\beta + \alpha\beta)$, we obtain that its $2$-adic valuation is 
$$
2 + \min\{ v + w,  v + v_2(c + \alpha) \}= 2 + \min \{v + w, v + \min\{v, w\}\}= 2 + v + \min\{v, w\},
$$
and this must be greater than or equal to zero.

\ 

If $v<w$, we obtain that $2 + 2v\geq 0$, from which $v\geq -1$ and $w>v\geq -1$. 

\ 

Finally, if $v>w$, then we have that $v_2(\alpha + \beta)\geq \min\{v, v\}=v >w$, so that $v_2(\alpha + \beta)\not=w$. Therefore, the coefficient of the term of degree 2, $4(-\alpha - \beta- c)$, has $2$-adic valuation $2 + \min\{v_2(\alpha + \beta), w\}=2+w$. This number must be greater than or equal to zero, which yields that $w\geq -2$ and  $v>w\geq -2$.

\ 

Using again that the polynomial in Equation (\ref{eq:integercoeff}) lies in $\mathbb{Z}[X]$, we  have $-4\alpha\beta c \in \mathbb{Z}$. Taking $2$-adic valuation, we obtain
\begin{equation}\label{eq:producto}
2 + 2v + w \geq 0.
\end{equation}

Moreover, the condition that the discriminant $\Delta$ is odd leads us to the equation:
\begin{equation}\label{eq:valuationDelta}\begin{aligned}
0 =v_2(\Delta)&=v_2 \Big( 4^2(\alpha- \beta)^2(\alpha - c)^2(\beta- c)^2 \Big)\\ 
& = 4 + v_2(4b^2d) + v_2 \Big( \big( (a-c) + b\sqrt{d} \big)^2 \big( (a-c)- b\sqrt{d} \big)^2 \Big)\\ 
& = 6 + v_2 \big( b^2d \big) + v_2 \Big( (a-c)^2 -b^2 d)^2 \Big)\\ 
&=6 + v_2 \big( b^2d \big) + 2v_2 \Big( a^2 - b^2d - 2ac + c^2 \Big).\\
\end{aligned}
\end{equation}
  As $b\in \mathbb{Q}$, this equation implies that $-1 =-v_2(d) = 6+ 2v_2(b) + 2v_2 ( a^2 - b^2d - 2ac + c^2 )$ is even, a contradiction.
\end{proof}

\section{The prime $\ell=2$: The mixed case}\label{sec:2_mixed}

Let us continue with the case where a new torsion point $P\in E(K)$ appears, such that a nontrivial multiple $mP$ belongs to $E(\mathbb{Q})$. As mentioned in the Introduction, from Theorem \ref{GT}, we can reduce ourselves to the following hypothesis:
\begin{align*}
    \text{There exists $P\in E(K)[N]\setminus E(\mathbb{Q})$ such that $2P\in E(\mathbb{Q})$ and $o(P) = N$},
\end{align*}
where $N=4,8$ or $16$. We are going to study each case now. We will need to perform several changes of variables, so we need a notation for them. Given a curve with an equation in variables $x$ and $y$, we will call the variables of the new curve $x'$ and $y'$. After making the change, we will revert to calling the variables $x$ and $y$ for simplicity.

\subsection*{The case $N=4$}\label{caso N=4} This subsection is devoted to the proof of the following result:

\begin{proposition}\label{N=4}
    Let $E/\mathbb{Q}$ be an elliptic curve and $K = \mathbb{Q}(\sqrt{d})$ a quadratic extension. Let us suppose that there exists $P\in E(K)[4]\setminus E(\mathbb{Q})$ such that $\mathcal{O} \neq 2P\in E(\mathbb{Q})$. Then if $2|d$, $E$ has bad reduction at $2$.
\end{proposition}

\begin{proof}
Let us consider a minimal Weierstrass equation for $E$ at $2$ with integer coefficients 
$$
E: y^2 + a_1xy + a_3y=x^3 + a_2x^2 + a_4x + a_6.
$$ 
It can be assumed that $a_1,a_3\in \{0,1\}$ and $a_2\in\{-1,0,1\}$. Let us
assume $K = \mathbb{Q}(\sqrt{d})$ with $d$ square free and $2|d$. We proceed by contradiction, so assume that $E$ has good reduction at $2$. So, if we call $\Delta$ the discriminant of the minimal form of $E$, we know $\Delta$ is an odd integer. 

Let us consider the following change:
$$
x = x',\quad y = y' -\frac{1}{2}\left( a_1x' + a_3 \right).
$$
It leads us to the equation:
$$
 y^2= x^3 + \frac{b_2}{4}x^2 + \frac{b_4}{2}x + \frac{b_6}{4}.
$$
where formulas for $b_i$ can be found in \cite[Chapter III]{Silverman}. It is easy to see that the discriminant of this equation is still $\Delta$. In order to get an equation with integer coefficients, we make the change
$$
x' = 4x,\quad y' = 8y
$$
and we get the equation (in $\mathbb{Z}[x,y]$)
$$
y^2 = x^3 + b_2x^2 + 8b_4x + 16 b_6.
$$
Let us call $\Delta'$ the discriminant of the previous equation. We have:
$$
2^{12}\Delta = \Delta'.
$$
By hypothesis, $\mathcal{O} \neq2P\in E_\tors(\mathbb{Q})$. So there exists $2P=Q = (\gamma,0)$ with $\gamma\in \mathbb{Z}$ by Nagell-Lutz theorem. Now, we apply the change of variables
$$
y'=y, \quad x' = x - \gamma.
$$

In this way, we can assume that the point $Q=2P\in E(\mathbb{Q})$ is $Q=(0, 0)$. Moreover, the new curve has the form
$$
y^2 = x^3 + Ax^2 + Bx
$$
with
\begin{eqnarray*}
A &=& 3\gamma + b_2,\\
B &=& 3\gamma^2 + 2\gamma b_2 + 8b_4
\end{eqnarray*}
where we have the following equation between discriminants
\begin{equation}\label{eq:eq_between_discriminants}
2^{12}\Delta = 16(A^2B^2-4B^3).
\end{equation}

$E_\tors(\mathbb{Q})[2]$ contains a copy of $\mathcal{C}_2$, so
$$
E_\tors(\mathbb{Q})[2] =  \mathcal{C}_2 \quad \text{or} \quad E_\tors(\mathbb{Q})[2] =  \mathcal{C}_2 \times \mathcal{C}_2, 
$$
and we are going to get a contradiction in both cases. 

\ 

\noindent \fbox{\textbf{Case I:} $E_\tors(\mathbb{Q})[2] =  \mathcal{C}_2$.} We can use the following lemma proven in \cite[Lemma 13]{GonTorII}.

\begin{lemma}\label{lema C2}
    Let
    $$
    y^2 = x(x^2 +Ax +B)
    $$
    be an elliptic curve over $\mathbb{Q}$ with $E_\tors(\mathbb{Q})=  \mathcal{C}_2$. Then, there exists a quadratic field $K$ with $\mathcal{C}_4\leq E_\tors(K)$ if and only if $B = s^2$ for some $s\in \mathbb{Q}$. Moreover, in this situation $K$ is one of the following two fields (they might be the same):
    $$
    K_{\pm} = \mathbb{Q}\left( \sqrt{A\pm 2s}\right).
    $$
\end{lemma}
If we apply this lemma to our problem, we obtain $s\in \mathbb{Z}$ such that $s^2=B$ and $\mathbb{Q}(\sqrt{d})=\mathbb{Q}(\sqrt{A\pm 2s})$. So Equation \eqref{eq:eq_between_discriminants} becomes
$$
2^{12}\Delta = 16s^4(A^2-4s^2) = 16s^4(A-2s)(A + 2s).
$$
Applying the 2-adic valuation $v_2$ we get
\begin{equation}\label{eq:eq_between_discriminants_particular_case}
8 = 4v_2(s) + v_2(A-2s) + v_2(A+2s),
\end{equation}
 At least one among $v_2(A-2s), v_2(A+2s)$ is odd as $v_2(d)$ is odd. Hence, they must be both odd and distinct, as if they were the same, their sum would not be divisible by 4. For $v_2(A-2s)$ and $v_2(A+2s)$ to be distinct, we must have $v_2(A)=v_2(2s)$. Furthermore, $v_2(A-2s)$ and $v_2(A+2s)$ are two odd integer greater than $v_2(2s)$, so their sum is at least $2v_2(s) + 4 = 2v_2(s) + 6$, which implies $v_2(s)= 0$, and furthermore $v_2(A) = 1$.
\\~\\
Writing $A=2r$ with $r$ odd, we have
$$
8 = 2+ v_2(r-s) + v_2(r+s),
$$
with $v_2(r-s)$ and $v_2(r+s)$ even and positive. However, we cannot have both $r-s$ and $r+s$ divisible by $4$, since $r$ and $s$ are odd. Thus, at least one among $v_2(r-s),v_2(r+s)$ equals $1$, a contradiction.

\ 

\noindent \fbox{\textbf{Case II: } $E_\tors(\mathbb{Q})[2] = \mathcal{C}_2\times\mathcal{C}_2$.} For this purpose, we will use the following classical result in the literature of elliptic curves \cite[Theorem 4.2]{Knapp}.

\begin{lemma}\label{lema del Knapp}
    Let $E$ be an elliptic curve defined over a number field $L$ given by
    $$
    y^2 = (x-\alpha)(x-\beta)(x-\gamma) 
    $$
    with $\alpha,\beta,\gamma\in L$. For $P=(x_0,y_0)\in E(L)$, there exists $Q\in E(L)$ such that $2Q = P$ if and only if $x_0-\alpha,x_0-\beta,x_0-\gamma\in \left(L^*\right)^2$.
\end{lemma}

In our case, the curve is
$$
y^2 = x(x^2+ AX+ B) = x(x-\alpha)(x-\beta)
$$
with $\alpha,\beta\in \mathbb{Q}$ and we know that there exists $Q\in E(\mathbb{Q}(\sqrt{d}))\setminus E(\mathbb{Q})$ such that $2Q = (0,0)$. Thus 
$$
-\alpha,-\beta\in \left(\mathbb{Q}(\sqrt{d})^*\right)^2.
$$

Note that $-\alpha$ and $-\beta$ cannot be both squares in $\mathbb{Q}$. Should that be the case, let us write
$$
E(\mathbb{Q})[2] = \{0, P = (0,0), P_1 = (\alpha,0), P_2=(\beta,0)\} \subset E(\mathbb{Q}) [4].
$$

Then from the previous lemma, as $-\alpha$ and $-\beta$ a rational squares, there exists a point $R \in E(\mathbb{Q})[4]$ and the four points of $E[4]$ that map to $(0, 0)$ under multiplication by $2$ are precisely $R, R + (0, 0), R + P_1, R + P_2$. These are all rational, which contradicts our hypothesis that there is some point $Q$ of $4$-torsion which is not rational and such that $2Q=(0, 0)$. 

So this amounts to the existence of $a,b\in \mathbb{Z}$ such that one of the following mutually exclusive pairs of equalities holds:
$$
\{-\alpha=a^2d, -\beta=b^2\},\quad \{-\alpha = a^2, -\beta = b^2d\} \quad \text{or} \quad \{-\alpha = a^2d, -\beta=b^2d\}.
$$
Again, we will assume each of these cases and we will get a contradiction.

\

\noindent \fbox{II.A: $-\alpha = a^2d, -\beta=b^2$.} Equation \eqref{eq:eq_between_discriminants_particular_case} becomes
$$
\begin{array}{rclcl}
2^{12}\Delta &=& 16B^2(A^2-4B) &=& 16(\alpha\beta)^2((\alpha+\beta)^2-4\alpha\beta) \\ &=& 16(\alpha\beta)^2(-\alpha + \beta)^2 &=& 16a^4b^4d^2(a^2d -b^2)^2 
\end{array}
$$
Taking the $2$-adic valuation we get
$$
6 = 4v_2(a) + 4v_2(b) + 2v_2(a^2d-b^2), 
$$
note that by hypothesis $v_2(d)= 1$. The equation implies that $v_2(a)\in \{0,1\}$. If $v_2(a) = 0$, we get
$$
6 = 4v_2(b) + 2v_2(a^2d-b^2).
$$
Again, $v_2(b)\in \{0,1\}$. If $v_2(b) = 0$, we get a contradiction easily. So $v_2(b) = 1$ and we have the following relations
\begin{eqnarray*}
    A &=& a^2d + b^2 = 3\gamma + b_2,\\
    B &=& a^2b^2d = 3\gamma^2+ 2\gamma b_2 + 8b_4.
\end{eqnarray*}
Taking valuations, we get
\begin{eqnarray*}
    1 &=& v_2(3\gamma + b_2),\\
    3 &=& v_2(3\gamma^2+ 2\gamma b_2 + 8b_4).
\end{eqnarray*}
From the second equation we deduce that $v_2(\gamma) \geq 1$. Now using the first equation, $b_2$ must be even, but we know that $b_2 = a_1^2 + 4a_2$ with $a_1\in\{0,1\}$. Therefore, $b_2 = 4a_2\in\{0,4,-4\}$ and in each of these cases $v_2(\gamma) = 1$, because $1 = v_2(3\gamma + b_2)$. If $b_2 =0$, we obtain
$$
3 = v_2(B) = v_2(3\gamma^2 + 8b_4),
$$
which implies that $1 = v_2(\gamma)\geq 2$, a contradiction. Therefore, $v_2(b_2) = 2$ and we have the relation 
$$
3 = v_2(3\gamma^2 + 2\gamma b_2 + 8b_4),
$$
which implies $1 = v_2(\gamma)\geq 2$, a contradiction.

\    

Assume now $v_2(a) = 1$, we get
$$
2 = 4v_2(b) + 2v_2(a^2d -b^2).
$$
So $v_2(b) = 0$ and $1= v_2(a^2d -b^2) = 0$, a contradiction.

\ 
    
\noindent \fbox{II.B: $-\alpha = a^2, -\beta=b^2d$.} This case is analogous to the previous one.

\ 
    
\noindent \fbox{II.C: $-\alpha = a^2d, -\beta=b^2d$.} Now we have the following equations:
\begin{eqnarray*}
    A &=& d(a^2 + b^2),\\
    B &=& a^2b^2d^2,\\
    2^{12}\Delta &=&16 a^4b^4d^6(a^2-b^2)^2.
\end{eqnarray*}
Taking the $2$-adic valuation in the last equation, we get
$$
2 = 4v_2(a) + 4v_2(b) + 2v_2(a-b) + 2v_2(a+b).
$$
So $v_2(a)=v_2(b) = 0$ and
$$
2 = 2v_2(a-b) + 2v_2(a+b) \geq 4,
$$
a contradiction.

\

This concludes the proof of Proposition \ref{N=4}. 
\end{proof}

\begin{remark}
It is a natural question if this statement still holds when we change the hypothesis of $2|d$ by the weaker hypothesis that $2$ ramifies in $K$. The answer is that it does not hold, a counterexample is the curve $17.a2$.
\end{remark}

\subsection*{The cases $N=8,16$}\label{subsec:N=8y16} 

\noindent Now we address the case where we have an elliptic curve $E/\mathbb{Q}$  with a point $P\in E(K)\setminus E(\mathbb{Q})$ of order $8$ or $16$ for a quadratic extension $K=\mathbb{Q}(\sqrt{d})$, such that $2P\in E(\mathbb{Q})$. 

\ 

First, some remarks which are common for both cases. Consider  a minimal Weierstrass equation at $2$:
$$
E_1: y^2 + a_1xy + a_3y = x^3 + a_2x^2 + a_4x + a_6,
$$
with $a_1,\dots,a_6\in \mathbb{Z}$. In order to understand the properties of $P$ in terms of $K = \mathbb{Q}(\sqrt{d})$, we are going to take our initial Weierstrass form into a more suitable form (the so--called Tate normal form). 

\ 

The first step is taking the minimal Weierstrass form into the already known form
\begin{equation}\label{eq_EllipticCurveE[2]}
E_2:y^2 = x^3 + b_2x^2 + 8b_4x + 16b_6,
\end{equation}
where we have the following relation between the respective discriminants of $E_1$ and $E_2$
$$
2^{12} \Delta_1 = \Delta_2,
$$
and where, by the Nagell-Lutz theorem, the point $Q=2P= (x_1,y_1)\in E_2(\mathbb{Q})$ has integer coordinates. 

\ 

A Tate normal form is an equation of the form
$$
y^2 + (1-c)xy - by = x^3 - bx^2.
$$

We are going to transform the equation of $E_2$ into one of these and the point $Q$ will be sent to $(0,0)$. First, we are going to get an equation of the form
$$
E_3: y^2 + \overline{a}_1xy + \overline{a}_3y = x^3 + \overline{a}_2x^2.
$$
Every change of variables that preserves the Weierstrass form must be like
$$
x = u^2x' + r,\quad y = u^3y' + su^2x' + t.
$$
The only parameter that changes the discriminant is $u$, so we impose $u=1$. Besides, we want to send $Q$ to $(0,0)$. Since 
$$
(x_1,y_1) \longmapsto \big( x_1-r, y_1-s(x_1-r) -t \big),
$$ 
we require that $r= x_1$ and $t=y_1$.  This makes $0$ the independent term of the equation. Now, we choose $s$ in order to force the coefficient in $x$ to be $0$, which give us the following relation
$$
0 = 8b_4 + 2b_2x_1 + 3x_1^2 -2sy_1.
$$
So, we have
\begin{equation}\label{eq:s}
s = \frac{8b_4 + 2b_2x_1 + 3x_1^2}{2y_1}.
\end{equation}

Note that $y_1\neq 0$ because $2Q \neq \mathcal{O}$. We obtain the following form
$$
E_3: y^2 + \overline{a}_1xy+ \overline{a}_3y = x^3 + \overline{a}_2x^2,
$$
with
$$
\def\arraystretch{1.5}
\left\{ \begin{array}{lcl} 
\overline{a}_1 & = & 2s,\\ 
\overline{a}_2 & = & -s^2+ 3x_1 +b_2, \\ 
\overline{a}_3 & = & 2y_1, \\ 
\Delta_3 & = & 2^{12} \Delta_1.
\end{array} \right.
$$

Note that $\overline{a}_2=0$ if and only if the point $Q$ is a $3$-torsion point (see \cite[V.5]{Knapp}).
In order to get a Tate normal form, we just have to equalize the coefficients of $y$ and $x^2$. We get the equation
$$
E_4: y^2 + \Tilde{a}_1xy+ \Tilde{a}_3y = x^3 + \Tilde{a}_2x^2,
$$
through the change of variables
$$
x \longmapsto \left(\frac{\overline{a}_3}{\overline{a}_2}\right)^2 x, \qquad y \longmapsto \left(\frac{\overline{a}_3}{\overline{a}_2}\right)^3 y.
$$

In this way, we have (mind $\overline{a}_3 = 2y_1 \neq 0$)
$$
\Tilde{a}_1 = \dfrac{\overline{a}_1\overline{a}_2}{\overline{a}_3}, \qquad 
\Tilde{a}_2 = \Tilde{a}_3 = \dfrac{\overline{a}_2^3}{\overline{a}_3^2}.
$$

\

If we call $b = -\Tilde{a}_2= -\Tilde{a}_3$ and $c= 1- \Tilde{a}_1$, we get the usual Tate normal form
$$
\mathcal{T}_{b,c}:  y^2+ (1-c)xy - by = x^3 - bx^2
$$
with the following relation between discriminants
\begin{equation}\label{eq:relation_discriminants}
\Delta_{b,c} = 2^{12}\left(\frac{\overline{a}_2}{\overline{a}_3}\right)^{12} \Delta_1.
\end{equation}

We can prove now the following result, which deals with the case $N=8$.

\begin{proposition}\label{N=8}
    Let $E/\mathbb{Q}$ be an elliptic curve and $K=\mathbb{Q}(\sqrt{d})$ with $d$ a squarefree even integer. If there exists a point $P\in E(K)[8]$ of order 8 such that $2P\in E(\mathbb{Q})$ then $E$ has bad reduction at $2$.
\end{proposition}

\begin{proof}
    We go as usual by contradiction, assuming that $E$ has good reduction at $2$, i.e. $\Delta_1$ is an odd integer. In order to get a Tate normal form, we use the previous coordinates changes. Since $Q=2P$ has order $4$, $c$ must be $0$ (see \cite[Chapter 4, $\S$4]{Husemoller}). So our Tate normal form is
     \begin{equation}\label{eq:Tate_with_c=0}
    \mathcal{T}_{b,0}: y^2 + xy -by = x^3 - bx^2,
    \end{equation}
    and, furthermore, we must have $1 = \Tilde{a}_1$ and hence $\overline{a}_1\overline{a}_2 = \overline{a}_3$.

    \
    
    We have that ${\mathcal C}_4\subset E_\tors(\mathbb{Q})$ and that, over a quadratic field $K$, ${\mathcal C}_8\subset E_\tors(K)$. First of all, we check which rational torsion subgroups contain ${\mathcal C}_4$, and we obtain that $E_\tors(\mathbb{Q})\in\{{\mathcal C}_4,{\mathcal C}_8,{\mathcal C}_{12},{\mathcal C}_2\times {\mathcal C}_4,{\mathcal C}_2\times {\mathcal C}_8\}$. Let us look at each case:
    \begin{enumerate}
        \item $E_\tors(\mathbb{Q}) = {\mathcal C}_4$. This case can occur.
        \item $E_\tors(\mathbb{Q}) = {\mathcal C}_8$. The only growths that can occur (see Theorem \ref{GT}) are $E_\tors(K) = {\mathcal C}_{16}$ or $E_\tors(K) = {\mathcal C}_{2}\times {\mathcal C}_{8}$. In the first case, there are no new points of order $8$, but in the second case we obtain points of order $8$ which are defined only after a base change to $K$. This case is considered in \cite[Section 4.1]{GonTorII}; the only possibility for $K$
        is $K=\mathbb{Q}(\sqrt{\Delta_E})$, which contradicts the condition $2|d$.
        \item $E_\tors(\mathbb{Q}) = {\mathcal C}_{12}$. According to Theorem \ref{GT}, the only possible growth is to ${\mathcal C}_2\times {\mathcal C}_{12}$. The same reasoning as above shows that this cannot happen.
        \item $E_\tors(\mathbb{Q}) = {\mathcal C}_2\times {\mathcal C}_4$. The only possible growth that allows for a point of order $8$ is to the group ${\mathcal C}_2\times {\mathcal C}_8$. This case can occur.
        \item $E_\tors(\mathbb{Q}) = {\mathcal C}_2\times {\mathcal C}_8$. In this case no growth can occur over a quadratic field.
    \end{enumerate}
    Thus, we have two possibilities: either $E_\tors(\mathbb{Q}) = {\mathcal C}_4$ or $E_\tors(\mathbb{Q}) = {\mathcal C}_2\times {\mathcal C}_4$. In both cases, we are going to apply \cite[Lemma 14]{GonTorII}. 
    
\begin{lemma}
Let $E$ be an elliptic curve defined over $\mathbb{Q}$ with $E_\tors(\mathbb{Q})= \mathcal{C}_4$. Let $t\in\mathbb{Q}$ such that $E$ is $\mathbb{Q}$--isomorphic to $\mathcal T_{t,0}$. There exists a quadratic field $K$ with $E_\tors(K) = \mathcal{C}_8$ if and only if $t=-s^2$ for some $s\in\mathbb{Q}$. 

Moreover, $K$ must be of the form $K_{\pm} = \mathbb{Q} (\sqrt{1\pm 4 s})$ and, in this situation, $K_+ \neq K_-$.
\end{lemma} 
        
    In this result the hypothesis states $E_\tors(\mathbb{Q}) = {\mathcal C}_4$. However, in \cite[Section 4.2]{GonTorII}, the authors explain that the first part of the proof of this lemma is valid also in the case when $E_\tors(\mathbb{Q}) = {\mathcal C}_2\times {\mathcal C}_4$ which is the part we actually need.
    
    Then we have that there exist $r\in \mathbb{Q}$ such that $b = -r^2$ and $K=\mathbb{Q}(\sqrt{1\pm 4r})$. Let us write $r = p/q$ with $\gcd(p,q) = 1$. So $\mathbb{Q}(\sqrt{d}) = \mathbb{Q}(\sqrt{q(q\pm 4p)})$, which implies that $2|q(q\pm 4p)$. Hence $2|q$ and $2\nmid p$. We will look closely first at the precise value of $v_2(q)$.
    
    \ 
    
    The discriminant of Equation \eqref{eq:Tate_with_c=0} is $b^4(1+16b) = r^8(1-16r^2)$, so from Equation \eqref{eq:relation_discriminants} we obtain
    $$
    \Delta_1 2^{12}= r^8(1+4r)(1-4r)\overline{a}_1^{12}. 
    $$
    Clearing denominators we get 
    $$
    q^{10}\Delta_1 2^{12}y_1^{12}= p^8(q+4p)(q-4p)(8b_4 + 2b_2x_1+ 3x_1^2)^{12}.
    $$
    
    Taking the $2$-adic valuation we get
    \begin{equation}\label{eq:valuation_discriminant}
    10v_2(q) + 12 + 12v_2(y_1) =  v_2(q+4p) + v_2(q-4p) + 12v_2(8b_4 + 2b_2x_1+ 3x_1^2).
    \end{equation}
    
    If $v_2(q) = 1$, it is equivalent to 
    $$
    22 + 12 v_2(y_1)= 2 + 12v_2(8b_4 + 2b_2x_1+ 3x_1^2),
    $$
    which leads to $5 = 3\left(v_2(8b_4 + 2b_2x_1+ 3x_1^2)-v_2(y_1)\right)$, a contradiction.
    
    So we must have $v_2(q)\geq 2$. From the definition of $b$ we get that 
    \begin{equation}\label{eq:b}
    -\frac{p^2}{q^2} = b = -\frac{\overline{a}_2^3}{\overline{a}_3^2} = -\frac{\overline{a}_3}{\overline{a}_1^3},
    \end{equation}
    as $\overline{a}_1\overline{a}_2 = \overline{a}_3$ in this case. Taking the $2$-adic valuation and using the definition of $\overline{a}_i$, we get 
    $$
    3v_2(8b_4 + 2b_2x_1+ 3x_1^2) = 2v_2(q)+1+4v_2(y_1). 
    $$
    If we multiply the last equation by $4$ and we substitute on Equation \eqref{eq:valuation_discriminant}, we get
    \begin{equation}\label{eq:b2}
    10v_2(q) + 12 + 12v_2(y_1) = v_2(q+4p) + v_2(q-4p) + 8v_2(q)+4+16v_2(y_1),
    \end{equation}
    which is equivalent to
    \begin{equation}\label{eq:valuation_q}
    2v_2(q) + 8 = 4v_2(y_1) + v_2(q+4p) + v_2(q-4p).
    \end{equation}
    Now let us assume that $v_2(q)\geq 3$,  so $2v_2(q) + 8 = 4v_2(y_1) + 4$. Dividing by $2$, the equation (\ref{eq:valuation_q}) becomes
    $$
    v_2(q) = 2(v_2(y_1)-1),
    $$
    which implies that $v_2(q)$ is even. However there exist odd numbers $M,N$ such that $ q = 2^{v_2(q)}N$ and $q\pm 4p = 4M$. But this implies that
    $$
    \mathbb{Q} \left( \sqrt{d} \right)= \mathbb{Q} \left( \sqrt{NM} \right),
    $$
    which is a contradiction because $2|d$ and $d$ is square-free. 
    
    Therefore we can affirm $v_2(q) = 2$.  Substituting it in the equation \eqref{eq:b2}, it becomes
    $$
    4(3-v_2(y_1)) = v_2(q+4p)+ v_2(q-4p) \geq 6,
    $$
    where the last inequality comes from the fact that $v_2(q\pm 4p)\geq 3$, as we know that $v_2(p) = 0$ and $v_2(q) = 2$. The inequality implies that $v_2(y_1) \in\{0,1\}$,  so we can separate the proof in two cases.
    
    \ 
    
    \noindent \fbox{\textbf{Case I:} $v_2(y_1)=0$.}
    If $v_2(y_1) = 0$, taking the $2$-adic valuation in the equation \eqref{eq:b}, we get
    $$
    -4 = v_2(b) = 3v_2(\overline{a}_2) - 2v_2(\overline{a}_3) = 3v_2(\overline{a}_2) -2.
    $$
    Hence, $-2 = 3v_2(\overline{a}_2)$, a contradiction. 
    
     \ 
    
    \noindent \fbox{\textbf{Case II:} $v_2(y_1)=1$.}
    In this case, $v_2(\overline{a}_3)=1 + v_2(y_1)=2$. Now, from Equation \eqref{eq:b}, we obtain that $-2v_2(q)=3v_2(\overline{a}_2) - 2v_2(\overline{a}_3)$. But in our case $v_2(q)=2$, so we must have $v_2(\overline{a}_2) = 0$. As $\overline{a}_1\overline{a}_2 = \overline{a}_3$,
    $$
    0 = v_2(\overline{a}_1) + v_2(\overline{a}_2) - v_2(\overline{a}_3).
    $$
    Therefore, $v_2(8b_4 + 2b_2x_1 + 3x_1^2) = 3$, which implies that $v_2(x_1)>0$. Using that $(x_1,y_1)$ is a point on the curve and $v_2(y_1)=1$ we get
    $$
    2 = v_2(x_1^3+ b_2x_1^2 + 8b_4x_1 + 16b_6) = v_2(x_1^3 + b_2x_1^2) = 2v_2(x_1) + v_2(x_1 + b_2).
    $$
    So, $v_2(x_1) = 1$ and $v_2(b_2) = 0$.  The pair $(x_1,y_1)$ is a solution of the equation 
    $$
    y^2 = x^3 + b_2x^2+8b_4x + 16b_6.
    $$
    The corresponding point in the original equation, i.e.
    $$
    y^2 + a_1xy + a_3y = x^3+a_2x^2+a_4x+a_6,
    $$
    is 
    $$
    \Big(\dfrac{x_1}{4},\dfrac{y_1-a_1x_1-4a_3}{8}\Big),
    $$
    which is a $4$-torsion point. So, a strong version of Nagell-Lutz Theorem \cite[VIII.7, Theorem 7.1]{Silverman} implies that 
    $$
    -1=v_2\left(\dfrac{x_1}{4}\right)\geq 0,
    $$
    a contradiction.
\end{proof}

Note that we cannot change the hypothesis of $2|d$ by $2$ ramifies over $K$, because the curve $15.a4$ has good reduction in $2$, its torsion over $\mathbb{Q}$ is $\mathcal{C}_4$ and over $\mathbb{Q}(\sqrt{3})$ (where $2$ ramifies) is $\mathcal{C}_8$.

\

When $N = 16$, we do not actually need the hypothesis $2|d$.

\begin{proposition}\label{N=16}
    Let $E/\mathbb{Q}$ be an elliptic curve and $K/\mathbb{Q}$ a quadratic extension. If there exists $P\in E(K)[16]$ of order 16, then $E$ has bad reduction in $2$.
\end{proposition}

\begin{proof}
    Because of \cite[Theorem 1.1]{Bruin2017}, any point $P\in E(K)$ of order $16$, satisfy that $Q=2P\in E(\mathbb{Q})$. We go again by contradiction and we assume that $E$ has good reduction in $2$, i.e. $\Delta_1$ is an odd integer. In order to get a Tate normal form, we use again the previous coordinates changes. Since $Q$ is an order $8$ point, there exists $t\in \mathbb{Q}$ such that
    $$
    c = \dfrac{(2t-1)(t-1)}{t},\quad b = (2t-1)(t-1).
    $$
    (see \cite[Chapter 4, $\S$ 4]{Husemoller}). In this way, the discriminant of the Tate normal form
    $$
    \mathcal{T}_{b,c}:  y^2+ (1-c)xy - by = x^3 - bx^2
    $$
    is 
    $$
    \Delta_{b,c} = \dfrac{(1-2t)^4(t-1)^8(8(t-1)t+1)}{t^4}.
    $$
Note that, since we have that $2P\in E_\tors(\mathbb{Q})$ is a point of order eight, then $E_\tors(\mathbb{Q})={\mathcal C}_8$ or ${\mathcal C}_2\times {\mathcal C}_8$. In the second case, there is no room for growth over a quadratic extension (cf.~\cite[Theorem 2]{GonTorII}). So we have $E_\tors(\mathbb{Q})={\mathcal C}_8$.
    
    Now, \cite[Lemma 16]{GonTorII} tell us that there exists $r\in \mathbb{Q}$ such that $t = r^2/(r^2+1)$ and 
    $$
    K = \mathbb{Q} \left( \sqrt{(r^4-1)(r^2\pm2r-1)} \right).
    $$
    Let us write $r=p/q$ with $\gcd(p,q) = 1$. Now, it follows 
    $$
    K = \mathbb{Q} \left( \sqrt{(p^4-q^4)(p^2\pm 2pq- q^2)} \right).
    $$
    On the other hand, we compute
    $$
    t= \frac{r^2}{r^2+1} = \frac{p^2}{p^2+ q^2}, \quad t-1 = \frac{-q^2}{p^2+ q^2}
    $$
    and 
    $$
    8(t-1)t+1 = \frac{(p^2+q^2)^2-8(pq)^2}{(p^2+ q^2)^2},\quad 1-2t=\frac{q^2-p^2}{p^2+ q^2}
    $$
    to compute the discriminant
    $$
   \Delta_{b,c}=\frac{(q^2-p^2)^4q^{16}((p^2+q^2)^2-8(pq)^2)}{p^8(p^2+q^2)^{10}}.
    $$

    We will now substitute this expression into Equation \eqref{eq:relation_discriminants}. First we compute the expression of $(\overline{a}_2/\overline{a}_3)^{12}$ in terms of $p$, $q$ and $y_1$.
    We have that $\overline{a}_2^3=-b\overline{a}_3^2$, hence
    $$
    \left(\frac{\overline{a_2}}{\overline{a}_3}\right)^{12}=\frac{(\overline{a}_2^3)^4}{\overline{a}_3^{12}}=\frac{b^4\overline{a}_3^8}{\overline{a}_3^{12}}=\frac{b^4}{\overline{a}_3^4}=\frac{b^4}{(2y_1)^4}.
    $$
    Thus
    $$
2^{12}\Delta_1=\left(\frac{\overline{a}_3}{\overline{a}_2}\right)^{12}\Delta_{b, c}=\frac{(2y_1)^4}{b^4}\Delta_{b, c}
    $$
    Now, we can apply all the previous formulas expressing $b$ and $\Delta_{b, c}$ in terms of $p$ and $q$, and after clearing denominators we get    
    \begin{equation}\label{eq:Delta_p_q}
        2^8\Delta_1 p^8(p^2 + q^2)^2=q^8 \big( (p^2 + q^2)^2 - 8(pq)^2 \big)y_1^4. 
    \end{equation}
    
    We divide now the proof in three cases:

    \ 
    
    \noindent \fbox{\textbf{Case I: } $v_2(p)=v_2(q)=0$.} Taking $2$-adic valuation in Equation (\ref{eq:Delta_p_q}) we have 
    $$
    8 + 2v_2(p^2 + q^2) =v_2((p^2 + q^2)^2 - 8(pq)^2) + 4v_2(y_1)
    $$
    Note that the sum of the squares of two odd numbers is always congruent to $2$ modulo $4$, so it has valuation 1. Therefore the above equation reduces to
    $$
    8 + 2 =2 + 4v_2(y_1),
    $$ and therefore $v_2(y_1)=2$. Moreover, $v_2(\overline{a}_3)=v_2(2y_1)=3$. Let us note that 
    $$
    1- \dfrac{\overline{a}_1\overline{a}_2}{\overline{a}_3} = c = \dfrac{q^2(q^2-p^2)}{(p^2+q^2)p^2},
    $$
    which implies that
    $$
    -\dfrac{\overline{a}_1\overline{a}_2}{\overline{a}_3} = \dfrac{(q^4-p^4)-2(pq)^2}{p^2(p^2+q^2)}.
    $$
    As $v_2(q^4-p^4) \geq 3$, we have that 
    $$
    v_2(\overline{a}_1) + v_2(\overline{a}_2)-3 = v_2((q^4-p^4)-2(pq)^2)- v_2(p^2+q^2) = 1-1 = 0,
    $$
    which is equivalent to 
    $$
    v_2(\overline{a}_2)+ v_2(8b_4+ 2b_2x_1 + 3x_1^2) = 5.
    $$
    Similarly, we can use the definition of $b$ to get 
     \begin{equation}\label{eq:15}
    - \dfrac{\overline{a}_2^3}{\overline{a}_3^2} = b = \dfrac{q^2(q^2-p^2)}{(p^2+q^2)^2},
    \end{equation}
    Taking $2$-adic valuation in the equation above, we get
    \begin{equation}\label{eq:valoracion_a2}
    3v_2(\overline{a}_2) = v_2(q^2-p^2) +4.
    \end{equation}

    Since the point $(x_1, y_1)$ belongs to the elliptic curve $E_2$, defined by Equation (\ref{eq_EllipticCurveE[2]}), we can conclude that $8\vert x_1^2(x_1 + b_2)$. We have two possibilities:
    \begin{enumerate}
        \item[(a)] $x_1$ is odd. In this case, $8\vert (x_1 + b_2)$. This allows us to compute the $2$-adic valuation of $s=(8b_4 + 2b_2x_1 + 3x_1^2)/(2y_1)$, namely 
        $$
        v_2(8b_4 + 2b_2x_1 + 3x_1^2)=v_2(8b_4 + x_1(2b_2 + 2x_1) + x_1^2)=0,
        $$
        thus $v_2(s)=-3$. But this implies that 
        $$
        v_2(\overline{a}_2)=v_2(-s^2 + 3x_1 + b_2)=-6,
        $$
        which contradicts Equation \eqref{eq:valoracion_a2}.

        \item[(b)] $x_1$ is even. Note that $v_2(x_1)\geq 2$. Indeed, if $v_2(x_1) = 1$, Equation (\ref{eq_EllipticCurveE[2]}) implies 
        $$
        4 = v_2(x_1^3+ b_2x_1^2+8b_4x_1 + 16b_6),
        $$
        so 
        $$
        4\leq 2v_2(x_1) + v_2(x_1+ b_2) = 2 + v_2(x_1+b_2).
        $$
        
        As $b_2 = 4a_2 + a_1^2$, $a_1\in\{0,1\}$ and $a_2\in\{-1,1,0\}$, we get a contradiction easily. Now, we have $v_2(8b_4+2b_2x_1+3x_1^2)\geq 3$ which implies that
        $$
        0\leq v_2(-s^2 +3x_1 + b_2)=v_2(\overline{a}_2) = 5- v_2(8b_4+2b_2x_1+3x_1^2) \leq 2.
        $$
        In the first inequality we have used that $s = (8b_4+2b_2x_1+3x_1^2)/(2y_1)$. The last inequality must be consistent with Equation \eqref{eq:valoracion_a2}:
        $$
        3v_2(\overline{a}_2) = v_2(q^2-p^2) + 4.
        $$
        Then $v_2(\overline{a}_2) = 2$ and $v_2(q^2-p^2) = 2$. This implies that
        $$
        v_2(p\pm q) = 1.
        $$
        Therefore $p\pm q \equiv 2 \mod 4$. Then, adding both congruences we get
        $$
        2p \equiv 0 \mod 4,
        $$
        which implies that $v_2(p) = 1$, a contradiction.
        \end{enumerate}

    \
    
    \noindent \fbox{\textbf{Case II: } $v_2(p)\neq 0$, $v_2(q) = 0$.} Taking the $2$-adic valuation  in Equation \eqref{eq:Delta_p_q}, we get
    $$
    8  + 8v_2(p)= 4v_2(y_1), 
    $$
    which implies that $v_2(y_1) = 2(1+v_2(p))$.  In particular, it is an even number greater than or equal to $4$. Note that $(x_1, y_1)$ is a torsion point of the curve $E_2$, which has integer coefficients. With a change of variables that preserves the coordinate $y$ and the discriminant, we can transform the equation of $E_2$ into an equation of the form $y^2=x^3 + Ax + B$, and apply Nagell-Lutz locally at $2$ (see for example \cite[VII, Theorem 3.4]{Silverman} and the proof of \cite[VIII, Corollary 7.2]{Silverman}). 
    
    We conclude that $y_1^2|\Delta_2=2^{12}\Delta_1$. Therefore, $v_2(y_1)\in\{4,6\}$. In addition, Equation \eqref{eq:15} implies that
    $$
    \overline{a}_2^3=\frac{-q^2(q^2-p^2)}{(p^2+ q^2)^2}\overline{a}_3^2.
    $$
    Taking again the $2$-adic valuation, we get
    $$
    3v_2(\overline{a}_2)2v_2(\overline{a}_3) = 2v_2(2y_1)\in\{10, 14\},
    $$
    a contradiction.
    
    \ 
 
    \noindent \fbox{\textbf{Case III: } $v_2(p) = 0$, $v_2(q)\neq 0$.} Taking the $2$-adic valuation in Equation \eqref{eq:Delta_p_q} we get the following equation
    $$
    8  = 4v_2(y_1) + 8v_2(q).
    $$
    Since $v_2(q)\geq 1$, we obtain that $v_2(y_1)=0$. Since $(x_1, y_1)$ belongs to the elliptic curve $E_2$ with Equation \eqref{eq_EllipticCurveE[2]}, we conclude that $x_1$ must be odd. This enables us to compute the valuation of $s$ from Equation \eqref{eq:s} and conclude that $v_2(s)=-1$, and therefore $v_2(\overline{a}_2)=-2$. Furthermore 
    $$
    v_2(b)=v_2 \left( \frac{(q^2-p^2)q^2}{(p^2 + q^2)^2} \right) = 2v_2(q)\geq 0.
    $$ 
    Therefore 
    $$
    0\leq v_2(b)=3v_2(\overline{a}_2) - 2v_2(\overline{a}_3)=3v_2(\overline{a}_2) - 2v_2(2y_1)-6 - 2\cdot 1=-8,
    $$
    which is a contradiction.
\end{proof}

\begin{corollary}[Alternative statement of Proposition \ref{N=16}]\label{cor16}
There are no elliptic curves defined over a quadratic field $K$ with $\mathcal{C}_{16} \subset E_\tors(K)$  and good reduction at $2$.
\end{corollary}
\begin{proof}
    It is a consequence of the previous result and \cite[Theorem 1.1]{Bruin2017}
\end{proof}
Note that the results of this section, along with Proposition \ref{2strict}, prove Theorem \ref{l2}.

\section{The prime $\ell=3$}

Let $E/\mathbb{Q}$ be an elliptic curve and $K=\mathbb{Q}(\sqrt{d})$  be a quadratic extension of $\mathbb{Q}$ such that there exists a point $P\in E[3]$ satisfying that $P\in E(K)\setminus E(\mathbb{Q})$. As we discussed in the introduction, whenever $3$ divides $d$ but $3$ is a prime of good reduction for $E$, we can reduce to this case (the strict case).

Then we have that $\overline{\rho}_{E, 3}(G_{\mathbb{Q}})$ is a subgroup of $\mathrm{GL}_2(\mathbb{F}_3)$ with a subgroup $H$ of index 2 (namely $\overline{\rho}_{E, 3}(G_K)$) with a fixed point other than $(0, 0)^t$ that is not a fixed point of $\overline{\rho}_{E, 3}(G_{\mathbb{Q}})$. 

Consider the lattice of subgroups of $\mathrm{GL}_2(\mathbb{F}_3)$ up to conjugation (according to \cite{Yvon}; the notation for the subgroups is taken from \cite{Zywina}):

\begin{equation*}
 \xymatrix{\ &\ & \ & \ & \GL_2(\mathbb{F}_3) & \ & \ & \ \\
           \ & \SL_2(\mathbb{F}_3)\ar@{-}[urrr] & \ &\ & \ & \ & \ & \ \\
           \ &\ & \ & \ & \ & \ & N_{ns}(3) \ar@{-}[uull]& \ \\
           \ &\ & \ & \ & B(3) \ar@{-}[uuu]& \ & \ & \ \\
           Q_8 \ar@{.}[uuur] \ar@{-}@/^1pc/[uurrrrrr]&\ & \ & \ & \ & \ & N_s(3) \ar@{-}[uu] & {\mathcal C}_{ns}(3) \ar@{-}[uul]\\
           \ &\ & {\mathcal C}_6 \ar@{.}[uuuul] \ar@{-}[uurr]& \bullet H_{3, 1} \ar@{-}[uur] & \ & H_{3, 2} \ar@{-}[uul]& \ & \ \\
           \ &\ & \ & \ & \ & \ & {\mathcal C}_4 \ar@{.}@/^4pc/[uullllll] \ar@{-}[uu] \ar@{-}[uur]& {\mathcal C}_s(3) \ar@{-}[uuulll] \ar@{-}[uul]\\
           \ &\ & \bullet {\mathcal C}_3\ar@{.}[uu] & \ & \ & \ & \ & \ \\
           \ &\ & \ & \ & \bullet H_{1, 1}\ar@{-}[uuul]\ar@{-}[uuur] \ar@{-}[uurrr]& \ & \langle -\mathrm{id}\rangle \ar@{.}[uuullll] \ar@{.}[uu] \ar@{-}[uur]& \ \\
           \ &\ & \ & \ & \bullet \{\mathrm{id}\} \ar@{.}[uull] \ar@{-}[u] \ar@{.}[urr]& \ & \ & \ \\
}
\end{equation*}

In this diagram, the dotted inclusions correspond to those contained in $\mathrm{SL}_2(\mathbb{F}_3)$. This information will be relevant, as $\overline{\rho}_{E, 3}(G_K)\subset \SL_2(\mathbb{F}_3)$ if and only if $K\supset \mathbb{Q}(\sqrt{-3})$, the cyclotomic extension of cubic roots of unity (see e.g.\cite[Section 1.3]{Yvon}). Additionally, we have marked with the symbol $(\bullet)$ the subgroups that fix a nontrivial element (as justified below).

Let us review the groups in the diagram:

\normalsize
\begin{enumerate}
    \item $\mathrm{GL}_2(\mathbb{F}_3)$. It has cardinality $48$. It has no nontrivial fixed points.
    
    \item $\mathrm{SL}_2(\mathbb{F}_3)$. It has cardinality $24$. It has no nontrivial fixed points.
    
    \item $B(3)$, Borel subgroup. Isomorphic to $\mathcal{D}_{12}$, the dihedral group of $12$ elements, and has the form
    $$\begin{pmatrix} * & *\\ 0 & *\end{pmatrix}.$$ It has no nontrivial fixed points, since if $(x,y)^t$ were a fixed point, for every invertible matrix $\begin{pmatrix} a & b\\ 0 & d\\\end{pmatrix}$, it would have to satisfy
    \begin{equation*}
        \begin{pmatrix} a & b\\ 0 & d
        \end{pmatrix}\begin{pmatrix} x \\ y \end{pmatrix}=\begin{pmatrix} x \\ y \end{pmatrix},
    \end{equation*} implying $ax + by=x$, $dy=y$. Taking $d=2$, we obtain $y=0$. Therefore $ax=x$; taking $a=2$, we would have $x=0$.
    
    \item $N_{ns}(3)$, the non-split Cartan normalizer. This group is isomorphic to $\widetilde{ \mathcal{D}}_{16}$, a quasi-dihedral group of order 16. It can be written as
    $$
    \left\langle \begin{pmatrix} 1 & -1\\ 1 & 1\end{pmatrix}, \begin{pmatrix} 1 & 0\\ 0 & -1\end{pmatrix} \right\rangle.
    $$     
    It has no nontrivial fixed points, since if $(x,y)^t$ were a fixed point, in particular
    \begin{equation*}
        \begin{pmatrix} 1 & -1\\ 1 & 1\end{pmatrix}\begin{pmatrix}x \\y\end{pmatrix}=\begin{pmatrix}x \\y\end{pmatrix},
    \end{equation*} hence $x-y=x$, $x+y=y$, implying $x=y=0$.
    
    \item $Q_8$. It is the subgroup 
    $$
    \left\langle \begin{pmatrix} 0 & 1\\ -1 & 0\end{pmatrix},\begin{pmatrix} 1 & 1\\ 1 & -1\end{pmatrix} \right\rangle.
    $$ 
    This group has no nontrivial fixed points, since if $(x,y)^t$ were a fixed point, in particular
    \begin{equation*}
        \begin{pmatrix} 1 & 1\\ 1 & -1\end{pmatrix}\begin{pmatrix}x \\y\end{pmatrix}=\begin{pmatrix}x \\y\end{pmatrix},
    \end{equation*} hence $x+y=x$, $x-y=y$, implying $x=y=0$.

     \item $N_s(3)$. It is isomorphic to $\mathcal{D}_8$, with the form
    $$
    \begin{pmatrix} * & 0\\ 0 & *\end{pmatrix}\cup \begin{pmatrix} 0 & *\\ * & 0\end{pmatrix}.
    $$ 
    This group has no fixed points, since for every $a, b\neq 0$, it should satisfy that 
    \begin{equation*}
        \begin{pmatrix} a & 0\\ 0 & b\end{pmatrix}\begin{pmatrix}x \\y\end{pmatrix}=\begin{pmatrix}x \\y\end{pmatrix},
    \end{equation*} meaning $ax=x$, $by=y$, which is only possible if $x=y=0$.
    
    \item ${\mathcal C}_{ns}(3)$. It is a cyclic group isomorphic to $\mathcal{C}_8$, generated by 
    $$ 
    \begin{pmatrix} 1 & -1\\ 1 & 1\end{pmatrix}.
    $$ 
    Again, if $(x,y)^t$ was a fixed point, we would have
    \begin{equation*}
        \begin{pmatrix} 1 & -1\\ 1 & 1\end{pmatrix}\begin{pmatrix}x \\y\end{pmatrix}=\begin{pmatrix}x \\y\end{pmatrix},
    \end{equation*} implying $x-y=x$, $x + y=y$, hence $x=y=0$.
    
    \item $\mathcal{C}_6$. It is the subgroup can be written as 
    $$
    \left\langle \begin{pmatrix} -1 & -1\\ 0 & -1\end{pmatrix} \right\rangle.
    $$ 
    This group has no nontrivial fixed points, since if there was such point $(x,y)^t$, 
    \begin{equation*}
        \begin{pmatrix} -1 & -1\\ 0 & -1\end{pmatrix}\begin{pmatrix}x \\y\end{pmatrix}=\begin{pmatrix}x \\y\end{pmatrix},
    \end{equation*} hence $-x-y=x$, $-y=y$, implying $x=y=0$.
    \item $H_{3, 1}$. This case is isomorphic to $S_3$, of the form
    $$
    \begin{pmatrix} 1 & *\\ 0 & *\end{pmatrix}.
    $$ 
    In this case $(x,y)^t$ is a fixed point if and only if for all $a, b$ with $b\neq 0$, we have
    \begin{equation*}
        \begin{pmatrix} 1 & a\\ 0 & b\end{pmatrix}\begin{pmatrix}x \\y\end{pmatrix}=\begin{pmatrix}x \\y\end{pmatrix},
    \end{equation*} that is, $x + ay=x$, $by=y$. Therefore, the nontrivial fixed points are  $(1,0)^t$ and $(2,0)^t$.
    \item $H_{3, 2}$. Again this case is isomorphic to $S_3$ with
    $$
    \begin{pmatrix} * & *\\ 0 & 1\end{pmatrix}.
    $$  
    This group has no nontrivial fixed points, since if $(x,y)^t$ were a fixed point, in particular
    \begin{equation*}
        \begin{pmatrix} a & b\\ 0 & 1\end{pmatrix}\begin{pmatrix}x \\y\end{pmatrix}=\begin{pmatrix}x \\y\end{pmatrix},
    \end{equation*} for every $a\neq 0$, hence $ax + by=x$ for all $a=1, 2$, $b=0, 1, 2$. Taking $a=2, b=0$, we obtain $x=0$, and taking any $a$, $b=1$, we obtain $y=0$.
   
    \item $\mathcal{C}_4$. It is generated by the matrix 
    $$
    \begin{pmatrix} 0 & 1\\ -1 & 0\end{pmatrix}.
    $$ 
    It has no nontrivial fixed points, since 
    \begin{equation*}
        \begin{pmatrix} 0 & 1\\ -1 & 0\end{pmatrix}\begin{pmatrix}x \\y\end{pmatrix}=\begin{pmatrix}x \\y\end{pmatrix},
    \end{equation*} 
    implies $y=x$, $-x=y$, hence $x=y=0$.
    \item ${\mathcal C}_s(3)$. It is isomorphic to $\mathcal{C}_2 \times \mathcal{C}_2$, written as
    $$
    \begin{pmatrix} * & 0\\ 0 & *\end{pmatrix}.
    $$  
    It has no fixed points, since if  $(x,y)^t$ were a fixed point, we would have
    \begin{equation*}
        \begin{pmatrix} a & 0\\ 0 & b\end{pmatrix}\begin{pmatrix}x \\y\end{pmatrix}=\begin{pmatrix}x \\y\end{pmatrix},
    \end{equation*} for every $a, b$ such that $ab\neq 0$. In particular,   $ax=x$, $by=y$, for $a=b=-1$, hence $x=y=0$.
    \item $\mathcal{C}_3$. It is the subgroup generated by the transvection 
    $$
    \begin{pmatrix} 1 & 1\\ 0 &1\end{pmatrix},$$
    and $(x,y)^t$ is a fixed point if and only if 
    \begin{equation*}
        \begin{pmatrix} 1 & 1\\ 0 & 1\end{pmatrix}\begin{pmatrix}x \\y\end{pmatrix}=\begin{pmatrix}x \\y\end{pmatrix},
    \end{equation*} 
    implying $x+y=x$, $y=y$. Therefore, this group has nontrivial fixed points, precisely $(1,0)^t$ and $(2,0)^t$.
    \item $H_{1, 1}$. It is a subgroup isomorphic to $\mathcal{C}_2$,  
    $$
    \begin{pmatrix} 1 & 0\\ 0 &*\end{pmatrix}.
    $$  
    Now, $(x,y)^t$ is a fixed point if and only if 
    \begin{equation*}
        \begin{pmatrix} 1 & 0\\ 0 & a\end{pmatrix}\begin{pmatrix}x \\y\end{pmatrix}=\begin{pmatrix}x \\y\end{pmatrix},
    \end{equation*} for $a=1, 2$, meaning $x=x$, $ay=y$. Therefore, this group has nontrivial fixed points: $(1,0)^t$ and $(2,0)^t$.
    \item $\langle -\mathrm{id}\rangle$. This group has no fixed points.
    \item $\{\mathrm{id}\}$. All points are fixed points for this group.
\end{enumerate}

\

Now, let us examine the possibilities that lead to the growth of torsion. Note that, after choosing a suitable basis of $E[\ell]$, we can assume that $\overline{\rho}_{E, 3}(G_\mathbb{Q})$ coincides with a subgroup in the list above. If we have a subgroup $H\subset \overline{\rho}_{E, 3}(G_\mathbb{Q})$ without fixed points, then any subgroup of  $\overline{\rho}_{E, 3}(G_\mathbb{Q})$ that is conjugate (inside $\mathrm{GL}_2(\mathbb{F}_3)$) to it will also satisfy that it does not have any fixed points. Therefore $\overline{\rho}_{E, 3}(G_K)$ cannot be conjugate (inside $\mathrm{GL}_2(\mathbb{F}_3)$) to such an $H$. We now proceed to analyse each case.

\

We have two different scenarios: the first one is when $E_\tors(\mathbb{Q})[3]$ is trivial, and the other is when $E_\tors(\mathbb{Q})[3]\simeq \mathcal{C}_3$.

\ 

If $E_\tors(\mathbb{Q})$ is trivial, we need a group without fixed points, such that it has a subgroup of index $2$ with fixed points. However, the only groups with fixed points are $H_{3, 1}$, ${\mathcal C}_3$, $H_{1, 1}$, and $\{\mathrm{id}\}$. And:
\begin{itemize}
    \item $H_{3, 1}$ is contained in $B(3)$: It can happen that $\overline{\rho}_{E, 3}(G_{\mathbb{Q}})$ is conjugate to $B(3)$ and $\overline{\rho}_{E, 3}(G_K)$ is conjugate (inside $\mathrm{GL}_2(\mathbb{F}_3)$) to $H_{3, 1}$.
    \item $H_{1, 1}$ is contained in ${\mathcal C}_s(3)$: It can happen that $\overline{\rho}_{E, 3}(G_{\mathbb{Q}})$ is conjugate to ${\mathcal C}_s(3)$ and $\overline{\rho}_{E, 3}(G_K)$ is conjugate (inside $\mathrm{GL}_2(\mathbb{F}_3)$) to $H_{1, 1}$.
    \item ${\mathcal C}_3\subset {\mathcal C}_6$, but as ${\mathcal C}_6\subseteq \SL_2(\mathbb{F}_3)$, it does not occur as  an image of the Galois representation.
    \item The same goes for $\langle -\mathrm{id}\rangle$, which, although contains $\{ \mathrm{id}\}$ as a subgroup of index $2$, is contained in $\SL_2(\mathbb{F}_3)$.
\end{itemize}   

If $E_\tors(\mathbb{Q})[3] \simeq \mathcal{C}_3$, necessarily $\Gal(\mathbb{Q}(E[3])/\mathbb{Q})\simeq H_{3, 1}, H_{1, 1}, \mathcal{C}_3$. Of these, the only subgroup that has $\{\mathrm{id}\}$ as a subgroup of index $2$ is $H_{1, 1}$. Thus, we have the possibility $\Gal(\mathbb{Q}(E[3])/\mathbb{Q})$ conjugate to $ H_{1, 1}$ and  $\Gal(\mathbb{Q}(E[3])/K)= \{\mathrm{id}\}$. As $\mathrm{id}\in \SL_2(\mathbb{F}_3)$, it must happen that $K=\mathbb{Q}(\sqrt{-3})$.

\begin{example}
Now, let us see examples of curves with good reduction at $3$ where a new point of $3$-torsion appears over a quadratic field ramified at $3$:

\begin{enumerate}
\item Curve $19.a2$. It has Galois group $\Gal(\mathbb{Q}(E[3])/\mathbb{Q})\simeq H_{1, 1}$, which has a fixed point. That is, the torsion over $\mathbb{Q}$ is $\mathcal{C}_3$. Over the field $\mathbb{Q}(\sqrt{-3})$, it has a trivial Galois group, i.e., the torsion is $\mathcal{C}_3 \times \mathcal{C}_3$.

\item Curve $80.b1$. It has the Galois group $\Gal(\mathbb{Q}(E[3])/\mathbb{Q})\simeq B(3)$, which has no fixed points. The torsion over $\mathbb{Q}$ is $\mathcal{C}_2$, but over the field $\mathbb{Q}(\sqrt{3})$, it has Galois group isomorphic to $H_{3, 1}$; the torsion is $\mathcal{C}_6$.

Note that this example is a case where there is extra ramification at $3$, but the growth occurs over an extension that is not the cyclotomic one. We also present an example where the torsion of the original curve is trivial.

\item Curve $50.b1$. It has Galois group $\Gal(\mathbb{Q}(E[3])/\mathbb{Q})\simeq B(3)$, which has no fixed points. The torsion over $\mathbb{Q}$ is trivial. Over the field $\mathbb{Q}(\sqrt{-15})$, it has Galois group isomorphic to $H_{3, 1}$; the torsion is $\mathcal{C}_3$. The curve $176.a1$ is another example of this situation.

\item Curve $175.b3$. This curve has Galois group $\Gal(\mathbb{Q}(E[3])/\mathbb{Q})\simeq {\mathcal C}_s(3)$, which has no fixed points. The torsion over $\mathbb{Q}$ is trivial. Over the field $\mathbb{Q}(\sqrt{-15})$ it has Galois group isomorphic to $H_{1, 1}$.
\end{enumerate}
\end{example}

In addition, we can characterize the growth of the torsion when $E$ has good reduction at $3$ and the discriminant of $K$ is a multiple of $3$. First, we prove there are no curves whose torsion grows from $\mathcal{C}_1$ to $\mathcal{C}_9$ and which have good reduction at $3$.

\begin{proposition}\label{prop:C1_to_C9}
    Let $E/\mathbb{Q}$ be an elliptic curve such that $E_\tors(\mathbb{Q})$ is trivial and $E_\tors(K) =  \mathcal{C}_9$, with $K$ a quadratic field. Then $E$ has bad reduction at 3.
\end{proposition}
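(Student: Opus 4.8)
\emph{Strategy.} I would argue by contradiction: assume $E$ has good reduction at $3$. Write $K=\mathbb{Q}(\sqrt d)$ with $d$ squarefree, let $\sigma$ generate $\Gal(K/\mathbb{Q})$, and choose $P\in E(K)$ of order $9$. Since $P+\sigma P$ is $\sigma$-invariant it lies in $E(\mathbb{Q})$, and being a torsion point it must equal $\mathcal{O}$ because $E_\tors(\mathbb{Q})$ is trivial; hence $\sigma P=-P$. Consequently the quadratic twist $E^{(d)}/\mathbb{Q}$ has a \emph{rational} point of order $9$: if $\varphi\colon E\to E^{(d)}$ is the $K$-isomorphism onto the twist with $\varphi^{\sigma}=-\varphi$, then $\varphi(P)$ is $\sigma$-invariant, so $\varphi(P)\in E^{(d)}(\mathbb{Q})$ has order $9$. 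From here the contradiction will come from the local behaviour of $E^{(d)}$ at $3$.

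\emph{The basic local input.} Over $\mathbb{Q}_3$ the kernel of reduction of the N\'eron model of any elliptic curve is a formal group over $\mathbb{Z}_3$, and since the ramification index $1$ satisfies $1<3-1$ the formal logarithm shows this group is torsion-free. Hence the reduction map $E^{(d)}(\mathbb{Q}_3)_\tors\to \mathcal{E}^{(d)}(\mathbb{F}_3)$ onto the $\mathbb{F}_3$-points of the special fibre is injective, so $\langle\varphi(P)\rangle\cong\mathcal{C}_9$ injects into $\mathcal{E}^{(d)}(\mathbb{F}_3)$. It therefore suffices to show that $\#\mathcal{E}^{(d)}(\mathbb{F}_3)$ is never divisible by $9$.

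\emph{Two cases.} If $3\nmid d$, then $K/\mathbb{Q}$ is unramified at $3$, so $E^{(d)}$ still has good reduction there and $\#\mathcal{E}^{(d)}(\mathbb{F}_3)=4-a_3$ with $|a_3|\le 3$ by the Hasse bound; this is at most $7$, hence not a multiple of $9$. If instead $3\mid d$, then $E^{(d)}$ becomes isomorphic to $E$ over the ramified quadratic extension $\mathbb{Q}_3(\sqrt d)$, over which it has good reduction; since a ramified quadratic twist of a curve with good reduction never has good reduction, and since $j(E^{(d)})=j(E)\in\mathbb{Z}_3$ excludes potentially multiplicative reduction, $E^{(d)}$ must have Kodaira type $\mathrm{I}_0^{*}$ at $3$. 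Then the identity component of the special fibre is $\mathbb{G}_a$ (contributing $3$ points over $\mathbb{F}_3$), the component group $\Phi$ has order dividing $4$, and each $\mathbb{F}_3$-rational non-identity component, being a $\mathbb{G}_a$-torsor, again contributes $3$ points; hence $\#\mathcal{E}^{(d)}(\mathbb{F}_3)=3\cdot\#\Phi(\mathbb{F}_3)\in\{3,6,12\}$, never a multiple of $9$. Either way we contradict the injection of $\mathcal{C}_9$, which proves the proposition.

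\emph{Main obstacle.} The delicate point is the ramified case: one must pin down the reduction type of the quadratic twist at $3$, the crucial facts being that a ramified quadratic twist at $3$ is \emph{tame} (because $2\nmid 3$) and potentially good, which forces type $\mathrm{I}_0^{*}$, together with the elementary observation that $\mathbb{G}_a(\mathbb{F}_3)$ has exactly $3$ elements. A more computational alternative, in the spirit of Sections $2$ and $3$, would be to put $E^{(d)}$ in Tate normal form using Kubert's parametrisation of $X_1(9)$, express the discriminant as a rational function of the parameter, and run a $3$-adic valuation analysis to preclude $E^{(d)}$ being the $\sqrt{\pm 3}$-twist of a curve with good reduction at $3$; there the work lies entirely in the bookkeeping of valuations.
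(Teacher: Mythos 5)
Your proof is correct, but it follows a genuinely different route from the paper's. Both arguments begin the same way: from $\sigma P=-P$ one transfers the order-$9$ point to a \emph{rational} point on the quadratic twist $E^{(d)}$ (the paper cites Corollary 4 of the first Gonz\'alez-Jim\'enez--Tornero paper for the decomposition $E(K)[n]=E(\mathbb{Q})[n]\times E_d(\mathbb{Q})[n]$, which is the same fact you derive by hand). After that the paper proceeds globally and computationally: it puts $E^{(d)}$ into Tate normal form at the order-$9$ point, invokes the Kubert parametrisation of $X_1(9)$ ($b=(t-1)t^2(t^2-t+1)$, $c=(t-1)t^2$), writes $t=p/q$, and runs a three-case $3$-adic valuation analysis on the resulting discriminant identity --- exactly the ``computational alternative'' you sketch at the end. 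You instead argue purely locally at $3$ with N\'eron models: injectivity of reduction on torsion (valid since $e=1<3-1$), the Hasse bound when $3\nmid d$, and the structure of the special fibre of a ramified quadratic twist when $3\mid d$. Your one delicate step --- that the twist has type $\mathrm{I}_0^{*}$ even at $p=3$ --- does hold and can be checked directly: on a model $y^2=x^3+a_2x^2+a_4x+a_6$ with unit discriminant the twist $y^2=x^3+da_2x^2+d^2a_4x+d^3a_6$ has $v_3(\Delta)=6<12$, hence is minimal, and the degree-$3$ polynomial appearing at Step 6 of Tate's algorithm has unit discriminant, giving $\mathrm{I}_0^{*}$; this is exactly what is needed to exclude types $\mathrm{IV}$, $\mathrm{IV}^{*}$, whose component group of order $3$ would otherwise leave room for a $9$-element $3$-Sylow in $\mathcal{E}^{(d)}(\mathbb{F}_3)$. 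The trade-off is clear: your argument is much shorter, conceptually transparent, and generalises readily (e.g.\ to growth by $\mathcal{C}_{\ell^2}$ at other primes of good reduction), at the cost of invoking N\'eron-model machinery; the paper's argument is elementary in its inputs (Nagell--Lutz plus bookkeeping) but long and specific to the parametrisation of $X_1(9)$.
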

\begin{proof}
    Let us call $K = \mathbb{Q}(\sqrt{d})$ with $d$ squarefree. First we consider a minimal model 
    $$
    E_1: y^2 + a_1xy + a_3y = x^3 + a_2x^2 + a_4x + a_6.
    $$

    Applying the change of variables
    \begin{equation*}\begin{cases}
        x = \displaystyle \frac{x'}{4}, \\ \\
        y = \displaystyle \frac{y'}{8}-\frac{1}{2}(a_1\frac{x}{4} + a_3),
        \end{cases}
    \end{equation*}
    we obtain a $\mathbb{Q}$-isomorphic curve
    $$
    E_2:y^2 = x^3 + b_2x^2 + 8b_4x + 16b_6,
    $$
    where the relation between discriminants is $2^{12}\Delta_1 = \Delta_2$. Now, we consider the twisted curve
    $$
    E_d: dy^2= x^3+ b_2x^2 + 8b_4x + 16b_6
    $$
    and we can use the fact (\cite[Corollary 4]{GonTorI}) that if $n>1$ is an odd integer, then
    $$
    E(K)[n]\cong  E(\mathbb{Q})[n]\times E_d(\mathbb{Q})[n].
    $$
    Setting $n=9$, we obtain that $E_d$ has a rational point of order $9$ over $\mathbb{Q}$. The change of variables $\{y'=y/d^2, x'=x/d\}$ transforms $E_d$ into the curve 
    $$
    E_3 : y^2=x^3 +db_2x^2 + 8d^2b_4x + 16d^3b_6.
    $$
   Note that $E_3$ also has a rational point of order $9$ that we will call $P = (x_1,y_1)$.
    The twist from $E_2$ to $E_3$ is the change of variables
    $$
    x' = dx,\quad y'= d\sqrt{d} y,
    $$
    which gives us the relation $\Delta_1 2^{12} d^6 = \Delta_3 $.  Now we transform $E_3$ into its Tate normal form based on $P$ as we did in Subsection \ref{subsec:N=8y16}. In order to do so, we pass through the equation
    $$
    E_4: y^2 + \overline{a}_1xy + \overline{a}_3y = x^3 + \overline{a}_2x^2,
    $$
    with the relations
    $$
\def\arraystretch{1.5}
\left\{ \begin{array}{lcl} 
s & = &\displaystyle \frac{8d^2b_4 + 2db_2x_1 + 3x_1^2}{2y_1}\\

\overline{a}_1 & = & 2s,\\ 
\overline{a}_2 & = & -s^2+ 3x_1 +db_2, \\ 
\overline{a}_3 & = & 2y_1, \\ 
\Delta_4 & = & 2^{12}d^6 \Delta_1.
\end{array} \right.
$$

We have that $\overline{a}_2\not=0$ because $P$ does not have order $3$ (see \cite[V.5]{Knapp}). So we can make a change of variables to get the Tate normal form 
$$
    \mathcal{T}_{b,c}:  y^2+ (1-c)xy - by = x^3 - bx^2,
$$
with $b = - \overline{a}_2^3/\overline{a}_3^2, c= 1 - (\overline{a}_1\overline{a}_2)/\overline{a}_3$ and the relation between discriminants
$$
\Delta_{b,c} = 2^{12}\left(\frac{\overline{a}_2}{\overline{a}_3}\right)^{12} d^6\Delta_1.
$$
Because of $\overline{a}_3^2b = -\overline{a}_2^3$, we obtain the equation
\begin{equation}\label{eq:Discriminant_Tate_3}
\Delta_{b,c} \overline{a}_3^4 = 2^{12}b^4d^6\Delta.
\end{equation}
Using \cite[Example 4.6]{Husemoller}, there exists $t\in\mathbb{Q}$ such that 
    $$
\def\arraystretch{1.5}
\left\{ \begin{array}{lcl} 
b & = &\displaystyle (t-1)t^2(t^2-t+1),\\
c & = & (t-1)t^2,\\ 
\Delta_{b,c} & = & (t-1)^9t^9(t^2-t+1)^3(t^3-6t^2+3t+1).
\end{array} \right.
$$

Now, we write $t = p/q$ with $\gcd(p,q) = 1$. Therefore, Equation \eqref{eq:Discriminant_Tate_3} yields 
\begin{equation}\label{eq:Discriminant_3_pq}
2^{12}\Delta_1 d^6q^7 (p^2-pq+q^2) = \overline{a}_3^4(p-q)^5p(p^3-6qp^2+3q^2p +q^3).
\end{equation}

Now we divide the proof in the following three cases:

\ 

\noindent \fbox{\textbf{Case I: } $v_3(p) = 0, v_3(q) \neq 0$.} Taking the $3$-adic valuation in Equation \eqref{eq:Discriminant_3_pq} we get
$$
6v_3(d) + 7v_3(q) = 4v_3(y_1).
$$
Because of the Nagell-Lutz theorem (as explained in the subsection for the cases $N=8,16$), $y_1^2|d^62^{12}\Delta_1$, which implies that $v_3(y_1)\in\{0,1,2,3\}$ because $d$ is squarefree. For each of these values, the last equation gives us a contradiction, as the left side is strictly greater than the right side.

\

\noindent \fbox{\textbf{Case II: } $v_3(q) = 0, v_3(p)\neq 0$.} Taking the $3$-adic valuation in Equation \eqref{eq:Discriminant_3_pq} we get
\begin{equation*}
 6v_3(d) = 4v_3(y_1) + v_3(p).
\end{equation*}
As $d$ is squarefree, $v_3(d)\in\{0,1\}$. If $v_3(d) = 0$, the equation gives us a contradiction. So $v_3(d) = 1$ and previous equation becomes
\begin{equation}\label{eq:Discriminants_vpnotzero}
6 = 4v_3(y_1) + v_3(p).
\end{equation}
Therefore, $v_3(y_1)\in\{0,1\}$. On the other hand, we have the relations $-b\overline{a}_3^2=\overline{a}_2^3$ and $c= 1 - (\overline{a}_1\overline{a}_2)/\overline{a}_3$, with 
$$
b = \frac{(p-q)p^2(p^2-pq+q^2)}{q^5}, \qquad 
c = \frac{(p-q)p^2}{q^3}.
$$

Taking the $3$-adic valuation, we get 
$$
\def\arraystretch{1.5}
\begin{array}{lcl} 
3v_3(\overline{a}_2) & = & v_3(b) + 2v_3(y_1) \text{ and }  v_3(b)=2v_3(p), \\
v_3(c) & = & \displaystyle v_3\left(1-\frac{\overline{a}_1\overline{a}_2}{\overline{a}_3}\right) \text{ and }  v_3(c)=2v_3(p)>0.
\end{array} 
$$
From the two equations for $v_3(c)$ we obtain that $v_3(\overline{a}_1)  +v_3(\overline{a}_2) -v_3(y_1) = 0$.

Now we assume that $v_3(y_1) = 0$. Equation \eqref{eq:Discriminants_vpnotzero} yields $v_3(p)=6$, so from the two equations for $v_3(b)$ we obtain that $v_3(b)= 12$ and $v_3(\overline{a}_2) = 4$. The relation for $c$ is 
$$
0 = v_3(\overline{a}_1)  +v_3(\overline{a}_2) -v_3(y_1) = v_3(8d^2b_4 + 2db_2x_1 + 3x_1^2) + 4, 
$$
which is a contradiction. Therefore, $v_3(y_1)=1$. Now we follow the same reasoning. Equation \eqref{eq:Discriminants_vpnotzero} yields $v_3(p) = 2$ and the equations for $b$ imply that $v_3(b)=4$ and $v_3(\overline{a}_2) = 2$. Finally, the relation for $c$ gives us
$$
0 = v_3(\overline{a}_1) + v_3(\overline{a}_2) - 1 = v_3(8d^2b_4 + 2db_2x_1+3x_1^2) >0,
$$
a contradiction.

\ 

\noindent \fbox{\textbf{Case III: } $v_3(p) = v_3(q) = 0$.} Taking the $3$-adic valuation in Equation \eqref{eq:Discriminant_3_pq}, we obtain
\begin{equation}\label{eq:valuation_Discriminant_3_pq}
6v_3(d) + v_3(p^2-pq + q^2) = 4v_3(y_1) + 5v_3(p-q) + v_3(p^3+ q^3 - 6p^2q + 3pq^2).
\end{equation}
 We can rewrite \eqref{eq:valuation_Discriminant_3_pq} in terms of $t = p/q$ and we get
\begin{equation}\label{eq:valuation_Discriminant_3_t}
6v_3(d) + v_3(t^2-t + 1) = 4v_3(y_1) + 5v_3(t-1) + v_3(t^3  - 6t^2 + 3t+1).
\end{equation}
As $v_3(t) = 0$, $t\equiv \pm 1\mod 3$. Let us assume first that $t = 1 + 3k$ for some $k\in \mathbb{Q}$ with $v_3(k)\geq 0$. Substituting $t$, we get
\begin{eqnarray*}
    t^2-t+1&=& 9k^2 + 3k + 1,\\
    t^3-6t^2+3t+1&=&343k^3 + 147k^2 - 42k - 17.
\end{eqnarray*}
which implies that $v_3(t^2-t+1) = v_3(t^3  - 6t^2 + 3t+1) = 0$. Therefore, \eqref{eq:valuation_Discriminant_3_t} becomes
$$
6v_3(d) = 4v_3(y_1) + 5v_3(t-1).
$$
Using that $v_3(t-1)>0$ and $v_3(d)\in\{0,1\}$ is easy to reach a contradiction. Then $t = -1 + 3k$ substituting again
\begin{eqnarray*}
    t^2-t+1&=& 9k^2 - 9k + 3,\\
    t^3-6t^2+3t+1&=&27k^3 - 81k^2 + 54k - 9.
\end{eqnarray*}
Therefore, $v_3(t^3  - 6t^2 + 3t+1) = 2$ and $v_3(t^2-t+1) = 1$. So \eqref{eq:valuation_Discriminant_3_t} becomes 
$$
6v_3(d) = 4v_3(y_1)  + 1,
$$
which implies that $1$ is even, a contradiction.
\end{proof}

\begin{proposition}\label{caso N=3}
    Let $E/\mathbb{Q}$ be an elliptic curve and $K=\mathbb{Q}(\sqrt{d})$ be a quadratic extension such that there exists a point $P\in E[3]$ such that $P\in E(K)\setminus E(\mathbb{Q})$. Let us assume $3|d$ and $E$ has good reduction at 3, then
    $$
    E_\tors(K) =   E_\tors(\mathbb{Q})\times \mathcal{C}_3.
    $$
\end{proposition}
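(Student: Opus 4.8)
The plan is to combine the group-theoretic analysis carried out above with the behaviour of torsion under quadratic twist and the known classifications of torsion subgroups over $\mathbb{Q}$ (Mazur) and over quadratic fields (Kamienny, Kenku--Momose). By that analysis the hypotheses force $\overline{\rho}_{E,3}(G_\mathbb{Q})$ to be conjugate to one of $B(3)$, $\mathcal{C}_s(3)$, $H_{1,1}$, with $\overline{\rho}_{E,3}(G_K)$ conjugate to $H_{3,1}$, $H_{1,1}$, $\{\mathrm{id}\}$ respectively (and $K=\mathbb{Q}(\sqrt{-3})$ in the last case). Comparing the fixed subspaces of these groups as recorded in the list above, in all three cases $E_\tors(K)[3]\simeq E_\tors(\mathbb{Q})[3]\times\mathcal{C}_3$, so it only remains to check that $E(K)$ contains no point of order $9$ and no new point of order $2$, $5$ or $7$.

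To rule out a point of order $9$: if $E$ already has full $3$-torsion over $K$, such a point would put $\mathcal{C}_3\times\mathcal{C}_9$ inside $E_\tors(K)$, which is not a quadratic torsion subgroup. Otherwise $E_\tors(\mathbb{Q})$ has no $3$-torsion, and the decomposition $E(K)[n]=E(\mathbb{Q})[n]\oplus E_d(\mathbb{Q})[n]$ for odd $n$ (see \cite[Corollary 4]{GonTorI}), applied with $n=9$, yields a rational $9$-torsion point on the quadratic twist $E_d$; I would then argue by cases. If $E_\tors(\mathbb{Q})$ carries a $2$-torsion point then, since $E[2]\simeq E_d[2]$ as $G_\mathbb{Q}$-modules, $E_d(\mathbb{Q})_\tors$ contains $\mathcal{C}_{18}$, contradicting Mazur; if it carries a $5$- or $7$-torsion point, then $E_\tors(K)$ contains $\mathcal{C}_{45}$ or $\mathcal{C}_{63}$, impossible over a quadratic field; and if $E_\tors(\mathbb{Q})$ is trivial, then $E_\tors(K)=\mathcal{C}_9$ (the other primes being excluded as in the next paragraph), so Proposition~\ref{prop:C1_to_C9} forces bad reduction at $3$, against the hypothesis.

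A new $5$-torsion point over $K$ would, combined with the new $3$-torsion point, force $\mathcal{C}_{15}$ over $\mathbb{Q}$ on $E_d$ via the same decomposition --- impossible by Mazur when $E_\tors(\mathbb{Q})$ has no $5$-torsion, while if it does then a second $5$-torsion point would require $\mathbb{Q}(E[5])\subseteq K$, impossible since $[\mathbb{Q}(E[5]):\mathbb{Q}]\geq 4$; a new $7$-torsion point would force $\mathcal{C}_{21}\subseteq E_\tors(K)$, again impossible over a quadratic field. For the prime $2$, recall that $G_\mathbb{Q}$ permutes the three nonzero points of $E[2]$ through a subgroup of $S_3$; if $E(\mathbb{Q})[2]$ is trivial this subgroup is $\mathcal{C}_3$ or $S_3$, and its subgroups of index at most $2$ still act without fixed points, so no $2$-torsion is gained over $K$; and if $E(\mathbb{Q})[2]\simeq\mathcal{C}_2$, a new $2$-torsion point would give $\mathbb{Q}(E[2])=K$, that is $K=\mathbb{Q}(\sqrt{\Delta_E})$, whose square-free part is prime to $3$ because $E$ has good reduction at $3$ --- contradicting $3\mid d$. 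Combining all of this, $E_\tors(K)[3^\infty]=E_\tors(\mathbb{Q})[3^\infty]\times\mathcal{C}_3$ and $E_\tors(K)$ has the same prime-to-$3$ part as $E_\tors(\mathbb{Q})$, whence $E_\tors(K)=E_\tors(\mathbb{Q})\times\mathcal{C}_3$.

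The delicate point throughout is that the torsion could a priori grow simultaneously at $3$ and at one of $2,5,7$, and every such combination has to be excluded; the device that keeps this manageable is to transport everything onto the twist $E_d$ through $E(K)[n]=E(\mathbb{Q})[n]\oplus E_d(\mathbb{Q})[n]$ (odd $n$) and $E[2]\simeq E_d[2]$, so that Mazur's theorem over $\mathbb{Q}$ and the classification of torsion over quadratic fields carry most of the argument, with Proposition~\ref{prop:C1_to_C9} covering the single residual configuration ($E_\tors(\mathbb{Q})$ trivial, $E_\tors(K)=\mathcal{C}_9$).
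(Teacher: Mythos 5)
Your handling of the odd primes is sound and in places cleaner than the paper's: the twist decomposition $E(K)[n]=E(\mathbb{Q})[n]\oplus E_d(\mathbb{Q})[n]$ together with Mazur and Kenku--Momose disposes of orders $9$, $5$, $7$ efficiently, your reduction of the order-$9$ case to Proposition~\ref{prop:C1_to_C9} matches the paper, and your treatment of new points of order exactly $2$ (via $\mathbb{Q}(E[2])=\mathbb{Q}(\sqrt{\Delta_E})$ when $E(\mathbb{Q})[2]\simeq\mathcal{C}_2$, and via the $S_3$-action otherwise) is essentially equivalent to the paper's Case II.

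However, there is a genuine gap at the prime $2$: you only exclude new points of order \emph{exactly} $2$, i.e.\ growth of $E(K)[2]$, and then conclude that the prime-to-$3$ part of the torsion is unchanged. That conclusion does not follow. The $2$-primary part can grow in the mixed sense, through a new point $Q$ of order $4$ with $2Q\in E(\mathbb{Q})$ and $E(K)[2]=E(\mathbb{Q})[2]$; the classification over quadratic fields explicitly permits the growths $\mathcal{C}_2\to\mathcal{C}_{12}$ and $\mathcal{C}_2\times\mathcal{C}_2\to\mathcal{C}_2\times\mathcal{C}_{12}$ simultaneously with the new $3$-torsion point, and both would falsify $E_\tors(K)=E_\tors(\mathbb{Q})\times\mathcal{C}_3$. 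Neither your Galois-theoretic argument on $E[2]$ nor the twist decomposition (which is only valid for odd $n$) sees these cases. Excluding them is precisely the content of Cases I and III of the paper's proof, and it is not soft: the paper invokes Lemma 13 of \cite{GonTorII} to pin down $K=\mathbb{Q}(\sqrt{A\pm 2s})$ with $s^2=B$ (respectively Knapp's halving criterion to write $-\alpha,-\beta$ as squares in $K^*$), and then derives a contradiction from a $3$-adic valuation computation on $2^{8}\Delta=B^2(A^2-4B)$ using $3\nmid\Delta$ and $3\mid d$. Your proof needs an argument of this kind to be complete.
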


\begin{proof}
If $E(\mathbb{Q})[3] =  \mathcal{C}_3$, the hypothesis implies that $E_\tors(K)=  E_\tors(\mathbb{Q})\times\mathcal{C}_3$, because of Theorem \ref{GT}. Therefore we can assume that $E(\mathbb{Q})[3]$ is trivial.
    
    \

Let us consider a minimal Weierstrass equation for $3$ with integer coefficients. 
$$
E: y^2 + a_1xy + a_3y=x^3 + a_2x^2 + a_4x + a_6.
$$
Now we go by contradiction. Using Theorem \ref{GT}, we see that the possible cases that can arise are: 
\begin{itemize}
    \item $E_\tors(\mathbb{Q})=\mathcal{C}_1$ and $E_\tors(K)=\mathcal{C}_9$,
    \item $E_\tors(\mathbb{Q})=\mathcal{C}_2$ and $E_\tors(K)=\mathcal{C}_{12}$ or $\mathcal{C}_2\times \mathcal{C}_6$, 
    \item $E_\tors(\mathbb{Q})=\mathcal{C}_4$ and $E_\tors(K)= \mathcal{C}_2\times \mathcal{C}_{12}$,
    \item $E_\tors(\mathbb{Q})=\mathcal{C}_2\times \mathcal{C}_2$ and $E_\tors(K)=\mathcal{C}_2\times \mathcal{C}_{12}$.
\end{itemize}

The first case is ruled out by Proposition \ref{prop:C1_to_C9}. We divide the proof into three cases:

\

\noindent \fbox{\textbf{Case I:} $E_\tors(\mathbb{Q})[2] = \mathcal{C}_2$ and $\mathcal{C}_4\leq E_\tors(K)$.} As in subsection \ref{caso N=4} we can do a change of variables and we get a Weierstrass equation:
$$
y^2 = x^3 + Ax^2 + Bx
$$
with $A,B\in \mathbb{Z}$ and the following equation between discriminants
\begin{equation*}
2^{8}\Delta = B^2(A^2-4B).
\end{equation*}
Now Lemma \ref{lema C2} tell us that $K = \mathbb{Q}(\sqrt{A\pm 2s})$ with $s\in\mathbb{Q}$ such that  $s^2 = B$, so $s\in\mathbb{Z}$. As $3|d$, $A\pm 2s$ must be a multiple of $3$. Then, from the equation relating the discriminants, we obtain
$$
0 = 2v_3(B) + v_3(A+2s) + v_3(A-2s) \geq 1,
$$
a contradiction.

\ 

\noindent \fbox{\textbf{Case II:} $E_\tors(\mathbb{Q})[2] = \mathcal{C}_2$ and $\mathcal{C}_2\times\mathcal{C}_2\leq E_\tors(K)$.} We can compute, as in Section 2, the $2$-torsion points with the $2$-division polynomial $\psi_2 = 2y + a_1x + a_3$; substituting 
$$
y \longmapsto \frac{1}{2}(-a_1x-a_3),
$$ 
we obtain the expression (see, again \cite[Ch. III]{Silverman})
$$
0=4x^3 + b_2x^2 + 2b_4x + b_6,
$$
where the discriminant of the polynomial 
$$
4^4(\alpha- \beta)^2(\alpha - c)^2(\beta- c)^2=16 \Delta.
$$ 

The $x$-coordinates of the nontrivial $2$-torsion points are the three roots of this polynomial. As there is a single rational $2$-torsion point, the above polynomial factors as
$$
4(x-\alpha)(x-\beta)(x-c),
$$
where $c\in \mathbb{Q}$ and $\alpha, \beta$ are conjugate elements in $\mathbb{Q}(\sqrt{d})$ by hypothesis. Let us say
$$
\alpha = a + b\sqrt{d}, \qquad \beta = a-b\sqrt{d}.
$$
    
As $3|d$, $\mathbb{Q}(\sqrt{d})$ ramifies at $3$, there is only one valuation of $K$ over $v_3$ such that $v_3(3) = 1$, so we will call it $v_3$ too. Therefore,
$$
\Delta=4^2(\alpha- \beta)^2(\alpha - c)^2(\beta- c)^2 \; \Longrightarrow \; 
0 = 2v_3(\alpha - \beta) + 2v_3(\alpha-c) + 2v_3(\beta-c). 
$$
Since $\alpha,\beta$ are roots of a rational polynomial whose leading coefficient is not divisible by $3$, every term in the right-hand side is nonnegative and $v_3(\alpha - \beta) = v_3(2b\sqrt{d})>0$, so we have a contradiction.
    
\
    
\noindent \fbox{\textbf{Case III:} $E_\tors(\mathbb{Q})[2] = \mathcal{C}_2\times \mathcal{C}_2$ and $\mathcal{C}_4\times\mathcal{C}_2\leq E_\tors(K)$.} Again, as in Section \ref{caso N=4}, we can do a change of variables and we get a Weierstrass equation:
$$
y^2 = x^3 + Ax^2 + Bx
$$
with $A,B\in \mathbb{Z}$ and the following equation between discriminants:
\begin{equation*}
2^{8}\Delta = B^2(A^2-4B).
\end{equation*}
In this situation, like in Section \ref{caso N=4}, we can assume that there is a point $Q\in E(K)[4]$ such that $2Q=(0, 0)$. Applying Lemma \ref{lema del Knapp} as in Subsection \ref{caso N=4}, we can assume that the polynomial  $x^3 + Ax^2 + Bx$ factors as $x(x-\alpha)(x-\beta)$, where $\alpha, \beta\in \mathbb{Q}$ satisfy that $-\alpha$ and  $-\beta$ are squares in $K^*$, but they are not both squares in $\mathbb{Q}^*$.  Thus either $-\alpha=a^2$ and $\-\beta=db^2$, or $-\alpha=da^2$ and $\-\beta=b^2$, or else $-\alpha=da^2$ and $\-\beta=db^2$ for some rational integers $a$ and $b$. Replacing $B=\alpha\beta$ and $A=-\alpha-\beta$ in the equation for $\Delta$ above, we obtain in all cases that there is an integer $V\in\mathbb{Z}$ such that
$$
2^{8}\Delta = dV.
$$
As $3\nmid \Delta$, we get a contradiction.
\end{proof}

\section{The primes $\ell=5$ and $\ell=7$}\label{sec:l57}

Throught this section we can already assume that the curve $E$ does not have complex multiplication, as the CM case has already been solved by E. González--Jiménez \cite{GJ}. Our goal here is to prove Theorem \ref{l57}, as stated in the introduction.

\

\noindent \textbf{Theorem \ref{l57}.} \textit{Let $E/\mathbb{Q}$ be an elliptic curve, $K$ a quadratic number field such that $E_\tors(\mathbb{Q}) \neq E_\tors(K)$. If $p=5,7$ ramifies in $K$, then $p \vert N_E$.}

\ 

The idea to exclude ramification at the primes $\ell=5$ and $\ell=7$ will be that, except in the case of good ordinary reduction with the action of the wild inertia being trivial, there cannot be $\ell$-torsion points defined over a quadratic extension of $\mathbb{Q}$ (nor over $\mathbb{Q}$). This is because the inertia group at $\ell$ is already too large to have fixed points. We will formalize this shortly.

Next, we will have to deal with the case where there is good ordinary reduction and the action of the wild inertia is trivial. In this case, it may happen that there is an $\ell$-torsion point defined over $\mathbb{Q}$ (and also over a quadratic field), and a detailed analysis of the possible images of $\overline{\rho}_{E, \ell}(G_{\mathbb{Q}})$ will be necessary.

\ 

Let $E/\mathbb{Q}$ be an elliptic curve with good reduction at the prime $\ell$, for $\ell = 5$ or $\ell = 7$. Let $K/\mathbb{Q}$ be a quadratic extension, $P\in E[\ell]$ a point such that $P\in E(K)\setminus E(\mathbb{Q})$. As mentioned in the first section, this implies $K\subset \mathbb{Q}(E[\ell])$. 

Suppose $\ell$ ramifies in $K/\mathbb{Q}$. Take a prime $\Lambda\vert \ell$ of $\mathbb{Q}(E[\ell])$, and consider the inertia group $I(\Lambda/\ell)\subset \Gal(\mathbb{Q}(E[\ell])/\mathbb{Q})$. We denote $\lambda=K\cap \Lambda$.

\ 

Let $L=\mathbb{Q}(E[\ell])^{I(\Lambda/\ell)}$ be the fixed field of $\mathbb{Q}(E[\ell])$ by the action of $I(\Lambda/\ell)$. The extension $L/\mathbb{Q}$ is, by definition, not ramified in $\Lambda$, while $K/\mathbb{Q}$ is totally ramified in $\ell$. Therefore they are linearly disjoint over $\mathbb{Q}$. We have the following diagram:

\begin{equation}\label{eq:diagrama}
 \xymatrix{\ & \mathbb{Q}(E[\ell]) \ar@{-}[d] \ar@{-}[dddr] \ar@{-}[ddl]_{I(\Lambda/\ell)}& \ \\
           \ & LK\ar@{-}[dl]^2 \ar@{-}[ddr]& \ \\
           L \ar@{-}[ddr] & \ & \  \ \\
           \  & \ & K \ar@{-}[dl]^2  \ \\
           \ & \mathbb{Q} & \ \\}
\end{equation}

Let us observe that the group $I(\Lambda/\ell)$ coincides with $\overline{\rho}_{E, \ell}(I_{\ell})$, where $I_{\ell}\subset G_{\mathbb{Q}}$ is the inertia group at $\ell$, after fixing a decomposition group at $\ell$ compatible with the prime $\Lambda$ of $\mathbb{Q}(E[\ell])$. Let us also note that $\Gal(\mathbb{Q}(E[\ell])/LK)\subset \Gal(\mathbb{Q}(E[\ell])/K)$, which fixes a point of $\ell$-torsion.

Therefore, we have:

\begin{itemize}
    \item The image $\overline{\rho}_{E, \ell}(G_{\mathbb{Q}})$ contains the subgroup $I(\Lambda/\ell)\simeq \overline{\rho}_{E, \ell}(I_\ell)$.
    
    \item $I(\Lambda/\ell)$ contains a subgroup of index 2, $\Gal(\mathbb{Q}(E[\ell])/LK)$, which fixes at least one point of $\ell$-torsion.
\end{itemize}

Next, we will see the form of $I(\Lambda/\ell)$ according to the type of reduction of $E$. 
The following propositions, which are proved in \cite[Section 1.11, Section 1.12]{Serre}, exactly determine the image of $I_{\ell}$ under $\overline{\rho}_{E, \ell}$:

\begin{proposition}\label{prop:action_inertia} 
Let $E/\mathbb{Q}$ be an elliptic curve with good reduction of height 1 or multiplicative reduction in a prime $\ell$. Then one and only one of the following possibilities holds:
\begin{enumerate}
    \item The wild inertia group $I^{\wild}_\ell$ acts trivially on $E[\ell]$. Then the image of $I_{\ell}$ is a cyclic group of order $\ell - 1$. In a suitable basis, it coincides with the subgroup
    \begin{equation*}
        H_1=\left\{\begin{pmatrix} a & 0\\ 0 & 1\end{pmatrix}: a\in \mathbb{F}_{\ell}^{\times}\right\}.
    \end{equation*}
    
    \item The wild inertia group $I^\wild_\ell$ does not act trivially on $E[\ell]$. Then the image of $I^\wild_\ell$ under $\overline{\rho}_{E, \ell}$ is a cyclic group of order $\ell$, and in a suitable basis, it can be represented as
    \begin{equation*}
        H_2 = \left\{\begin{pmatrix} 1 & b\\ 0 & 1\end{pmatrix}: b\in \mathbb{F}_{\ell}\right\}.
    \end{equation*}
    The image of $I_\ell$ has order $\ell(\ell - 1)$ and can be represented as
    \begin{equation*}
        H_3 = \left\{\begin{pmatrix} a & b\\ 0 & 1\end{pmatrix}: a\in \mathbb{F}_{\ell}^{\times}, b\in \mathbb{F}_{\ell}\right\}.
    \end{equation*}
\end{enumerate}
\end{proposition}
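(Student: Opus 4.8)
The plan is to reduce everything to a local statement at $\ell$ and to exploit the connected--étale sequence of the $\ell$-divisible group of $E$ over $\mathbb{Z}_\ell$, which is the heart of the argument of \cite[1.11]{Serre}. Everything can be checked after restricting $\overline{\rho}_{E,\ell}$ to a decomposition group $G_{\mathbb{Q}_\ell}$ compatible with $\Lambda$. Since $E$ has good reduction of height $1$ (equivalently, ordinary reduction), the $\ell$-torsion group scheme sits in an exact sequence
\begin{equation*}
0 \longrightarrow E[\ell]^{0} \longrightarrow E[\ell] \longrightarrow E[\ell]^{\text{\'et}} \longrightarrow 0
\end{equation*}
of finite flat group schemes over $\mathbb{Z}_\ell$, where $E[\ell]^{0}=\widehat{E}[\ell]$ is the $\ell$-torsion of the formal group (of order $\ell$, since the height is $1$) and $E[\ell]^{\text{\'et}}$ is étale of order $\ell$. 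Passing to generic fibres, this produces a $G_{\mathbb{Q}_\ell}$-stable line in $E[\ell]$, so in a basis adapted to it the restriction of $\overline{\rho}_{E,\ell}$ is upper triangular, $\left(\begin{smallmatrix}\chi_1 & *\\ 0 & \chi_2\end{smallmatrix}\right)$, with $\chi_1$ acting on the formal part and $\chi_2$ on the étale part.

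First I would pin down the diagonal characters on inertia. The quotient $E[\ell]^{\text{\'et}}$ is unramified (an étale group scheme over $\mathbb{Z}_\ell$ gives an unramified Galois module), so $\chi_2|_{I_\ell}=1$. By the Weil pairing $\chi_1\chi_2=\chi_{\mathrm{cyc}}$, the mod $\ell$ cyclotomic character, whence $\chi_1|_{I_\ell}=\chi_{\mathrm{cyc}}|_{I_\ell}=\theta$, the fundamental tame character, which is tame (trivial on $I^{\wild}_\ell$) and surjects onto $\mathbb{F}_\ell^{\times}$. Consequently $\overline{\rho}_{E,\ell}(I_\ell)\subseteq H_3$ and surjects onto the torus $\{\mathrm{diag}(a,1)\}$.

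The dichotomy then comes from the pro-$\ell$ group $I^{\wild}_\ell$: it acts trivially on $E[\ell]^{\text{\'et}}$ (unramified) and on $E[\ell]^{0}$ through $\chi_1=\theta$, which vanishes on it, so $\overline{\rho}_{E,\ell}(I^{\wild}_\ell)$ is contained in the unipotent subgroup $H_2$, cyclic of order $\ell$; hence it is either trivial or all of $H_2$, and these are the two mutually exclusive cases of the statement. If it is trivial, $\overline{\rho}_{E,\ell}(I_\ell)$ is a finite quotient of the tame inertia $I_\ell/I^{\wild}_\ell\cong\widehat{\mathbb{Z}}^{(\ell')}$, hence cyclic of order prime to $\ell$; mapping onto $\mathbb{F}_\ell^{\times}$ it has order exactly $\ell-1$ and is generated by some $\left(\begin{smallmatrix}\zeta & b\\ 0 & 1\end{smallmatrix}\right)$ with $\zeta$ of multiplicative order $\ell-1$, hence diagonalizable, so the image is conjugate to $H_1$. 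If it is all of $H_2$, then $\overline{\rho}_{E,\ell}(I_\ell)\supseteq H_2$ and maps onto $H_3/H_2\cong\mathbb{F}_\ell^{\times}$, which forces $\overline{\rho}_{E,\ell}(I_\ell)=H_3$ of order $\ell(\ell-1)$ and $\overline{\rho}_{E,\ell}(I^{\wild}_\ell)=H_2$ of order $\ell$.

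The step I expect to be the crucial input one must not skip is the formal-group part: one needs that $\widehat{E}$ has height $1$ in the ordinary case, so that $E[\ell]^{0}$ is exactly one-dimensional, together with the very existence of the connected--étale sequence over $\mathbb{Z}_\ell$ and the unramifiedness of its étale quotient. With that in hand, the computation of $\chi_1|_{I_\ell}$ is forced by $\det\overline{\rho}_{E,\ell}=\chi_{\mathrm{cyc}}$, so no explicit Lubin--Tate computation is needed; I would simply cite the standard theory of $\ell$-divisible groups over a complete discrete valuation ring with perfect residue field for those structural facts and for the order of $\widehat{E}[\ell]$.
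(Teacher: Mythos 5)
The paper offers no proof of this statement, citing it directly to \cite[Section 1.11]{Serre}, and your argument is a correct reconstruction of exactly that source's proof: the connected--\'etale sequence of $E[\ell]$ over $\mathbb{Z}_\ell$ in the ordinary (height $1$) case, unramifiedness of the \'etale quotient, identification of the character on the connected part with the fundamental tame character via $\det\overline{\rho}_{E,\ell}=\chi_{\mathrm{cyc}}$, and the dichotomy according to whether the pro-$\ell$ group $I^{\wild}_\ell$ lands trivially or onto the full unipotent subgroup $H_2$. All steps check out (including the diagonalizability of the generator in the tame case), so this is correct and essentially the same approach as the cited proof.
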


\begin{proposition}\label{prop:12}
Let $p,\ell$ be two different primes. Let $E/\mathbb{Q}$ be an elliptic curve with  multiplicative reduction in a prime $p$. Then the image of $I_{\ell}$ is trivial or a cyclic group of order $\ell$.
\end{proposition}

 We will now analyze each of the possible types of reduction of $E$ at $\ell$, proving that, for each case, the existence of a point $P\in E(K)[\ell]\setminus E(\mathbb{Q})$ yields a contradiction. Taking Remark \ref{rem:1} into account, this reasoning will prove Theorem \ref{l57}.

\subsection{Ordinary  good reduction  or multiplicative reduction, with nontrivial wild inertia action} 

 We will assume in this case that $E$ has good reduction at $\ell$ and  $I^\wild_\ell$ does not act trivially. Note that $H_3$ does not leave any element other than $(0, 0)^t$ invariant; in particular, there cannot be any rational point of $\ell$-torsion.

Furthermore, no subgroup of index $2$ in $H_3$ leaves any element other than $(0, 0)^t$ invariant.  Indeed, since the cardinality of $H_3$ is $\ell(\ell-1)$ and $\ell>2$, any subgroup $H$ of $H_3$ of index $2$ must contain an element of order $\ell$. Such elements are precisely the non-trivial elements of $H_2$. Therefore, $H$ contains $H_2$. On the other hand, we have
\begin{equation*}
    \begin{pmatrix} 1 & b\\ 0 & 1\end{pmatrix}\begin{pmatrix} x \\ y\end{pmatrix} = \begin{pmatrix} x \\ y\end{pmatrix}
\end{equation*}
if and only if $by=0$, so the only nontrivial fixed points by the entire subgroup are of the form $(x, 0)^t$, where $x\neq 0$.  However, any subgroup of index $2$ must contain at least one matrix of the form 
$$
\begin{pmatrix} a & b\\ 0 & 1\end{pmatrix}, \mbox{ with } a\neq 1,
$$ (otherwise the subgroup would be contained in $H_2$, and $H_2$ is already too small to have index two in $H_3$, provided $\ell>3$). 
Now, this matrix does not fix the point $(x, 0)^t$. This shows that for $\ell=5, 7$, it is not possible to add an  $\ell$-torsion point over a quadratic extension that ramifies at $\ell$.

\

\subsection{Ordinary good reduction  or multiplicative reduction, with trivial wild inertia action}\label{subsec:wild_trivial_inertia}

Now suppose that $E/\mathbb{Q}$ is a curve with good ordinary reduction at $\ell$, such that the wild inertia group acts trivially. By Proposition \ref{prop:action_inertia}, we have that $\overline{\rho}_{E, \ell}(G_{\mathbb{Q}})$ contains, in a suitable basis, a subgroup of the form 
\begin{equation*}
    H_1= \left\{\begin{pmatrix} a & 0\\ 0 & 1\end{pmatrix}: a\in \mathbb{F}_{\ell}^{\times}\right\}.
\end{equation*}

Sutherland \cite{Sutherland} and Zywina \cite{Zywina} have characterized all possible subgroups that can appear as $\overline{\rho}_{E, \ell}(G_{\mathbb{Q}})$ in the cases $\ell=5$ and $\ell=7$. We will go through each of the cases, ruling out in each one the possibility of having a point of $\ell$-torsion over a quadratic extension $K/\mathbb{Q}$.

\

\noindent \fbox{\textbf{Case 5.2.I:} $\ell=5$.} In this case, \cite{Zywina} shows that $\overline{\rho}_{E, 5}(G_{\mathbb{Q}})$ is conjugate in $\GL_2(\mathbb{F}_5)$ to a group from a list of 15 possible groups (see \cite[Theorem 1.4]{Zywina}). Furthermore, Sutherland studies these groups and determines in each case the index of the largest subgroup that fixes a nonzero vector in $\mathbb{F}_{5}^2$; this quantity coincides with the degree of the minimal extension $K/\mathbb{Q}$ such that $E$ has a rational point of $5$-torsion. 

In \cite[Table 3, p. 64]{Sutherland}, we can find the list of these 15 groups along with their indices. There are only four of them where this degree is $2$, specifically those labeled as 5Cs.1.3, 5Cs.4.1, 5B.1.4, and 5B.4.1. Let us examine each of these cases:

\begin{itemize}
    \item 5Cs.1.3. It is a cyclic subgroup of order $4$, generated by the matrix 
    $$
    \begin{pmatrix} 3 & 0\\ 0 & 4\end{pmatrix}.
    $$
    Since we are assuming that $E$ has good ordinary reduction at $5$, $\overline{\rho}_{E, \ell}(G_{\mathbb{Q}})$ must contain a subgroup conjugate to $H_1$, which is also cyclic of order $4$. However, the subgroup 5Cs.1.3 is not conjugate to $H_1$, because it does not fix any nonzero element of $\mathbb{F}_5^2$. The conclusion is that this image cannot occur if $E$ has good ordinary reduction at $5$ (it also cannot have supersingular reduction, by a similar reasoning). 
    
    \item 5Cs.4.1. It is a group of order $8$, generated by the matrices
    $$
    \left\{ 
    \begin{pmatrix} 4 & 0\\ 0 & 4\end{pmatrix}, \begin{pmatrix} 1 & 0\\ 0 & 2\end{pmatrix} \right\}.
    $$
    The subgroup generated by the matrix 
    $$
    \begin{pmatrix} 1 & 0\\ 0 & 2\end{pmatrix}
    $$ 
    is conjugate to the subgroup $H_1$, and it is the only cyclic subgroup of order $4$ in 5Cs.4.1 that fixes a nonzero element of $\mathbb{F}_5^2$. Therefore,
    $$
    \Gal(\mathbb{Q}(E[5])/L)=\overline{\rho}_{E, 5}(I_{5})= \left\langle \begin{pmatrix} 1 & 0\\ 0 & 2\end{pmatrix} \right\rangle.
    $$
    Moreover, $\Gal(\mathbb{Q}(E[5])/K)$ is a subgroup of $\Gal(\mathbb{Q}(E[5])/\mathbb{Q})$ of order $4$. Since $L\neq K$, it must  be a subgroup of $\Gal(\mathbb{Q}(E[5])/\mathbb{Q})$ different from $\Gal(\mathbb{Q}(E[5])/L)$, and thus it cannot fix a nonzero element of $\mathbb{F}_5^2$. 
    
    Therefore, $\Gal(\mathbb{Q}(E[5])/K)$ does not have nontrivial fixed points, which contradicts the fact that $E$ has a nontrivial $5$-torsion point over $K$.
    
    \item 5B.1.4. It is a subgroup of order $20$, generated by the matrices
    \begin{equation*}
        \left\{ \begin{pmatrix} 4 & 0\\ 0 & 3\end{pmatrix}, \begin{pmatrix} 1 & 1\\ 0 & 1\end{pmatrix} \right\}.
    \end{equation*}
    It is easy to verify that no cyclic subgroup of order $4$ has a nontrivial fixed point. Therefore, there is no subgroup conjugate to $H_1$. This implies that the curve $E/\mathbb{Q}$ cannot have good reduction at $5$. 

\item 5B.4.1. is a subgroup of order $40$, generated by the matrices
 \begin{equation*}
\left\{ \begin{pmatrix} 4 & 0\\ 0 & 4\end{pmatrix}, \begin{pmatrix} 1 & 0\\ 0 & 2\end{pmatrix}, \begin{pmatrix} 1 & 1\\ 0 & 1\end{pmatrix} \right\}.
\end{equation*}

Again, there is a subgroup which is conjugate to $H_1$, the subgroup 
\begin{equation*} 
H=
\left\{
\begin{pmatrix} 1 & 0\\ 0 & 2\end{pmatrix}, \begin{pmatrix} 1 & 0\\ 0 & 4\end{pmatrix}, \begin{pmatrix} 1 & 0\\ 0 & 3\end{pmatrix}, \begin{pmatrix} 1 & 0\\ 0 & 1\end{pmatrix}
\right\}.
\end{equation*} 

In this case there are more cyclic subgroups of order $4$ that have a fixed point; specifically:
\begin{equation*} 
\left\{
\begin{pmatrix} 1 & 1\\ 0 &  2\end{pmatrix},    \begin{pmatrix} 1  & 3\\  0&  4\end{pmatrix}, \begin{pmatrix} 1 &  2\\  0 & 3\end{pmatrix}, \begin{pmatrix} 1 & 0\\ 0 & 1\end{pmatrix}
\right\}, \quad
 \left\{
\begin{pmatrix} 1 & 1\\ 0 & 3\end{pmatrix}, \begin{pmatrix} 1 & 4\\  0 & 4\end{pmatrix}, \begin{pmatrix} 1 &  3 \\ 0 & 2\end{pmatrix},  \begin{pmatrix} 1 & 0\\ 0 & 1\end{pmatrix}
\right\},
\end{equation*}

\begin{equation*}
 \left\{
\begin{pmatrix} 1 & 2\\ 0 &  2\end{pmatrix}, \begin{pmatrix} 1 &  1\\  0 &  4\end{pmatrix},  \begin{pmatrix} 1 & 4  \\  0 & 3\end{pmatrix} \begin{pmatrix} 1 & 0\\ 0 & 1\end{pmatrix}
\right\}, \quad
 \left\{
\begin{pmatrix} 1 & 3\\ 0 & 3\end{pmatrix}, \begin{pmatrix} 1 &  2\\  0 & 4\end{pmatrix}, \begin{pmatrix} 1 &  4\\  0 & 2\end{pmatrix},
\begin{pmatrix} 1 & 0\\ 0 & 1\end{pmatrix}
\right\}.
\end{equation*}

In fact, all of these groups are conjugate to $H$  inside the group $5B.4.1$:

\begin{equation*}
 \begin{pmatrix} 1 & 1 \\ 0 & 1\end{pmatrix} \begin{pmatrix} 1 & 0\\ 0 & 2\end{pmatrix} \begin{pmatrix}1 & 1\\ 0 & 1\end{pmatrix}^{-1}=\begin{pmatrix} 1 & 1\\ 0 & 2\end{pmatrix}, \quad
 \begin{pmatrix} 1 & 1 \\ 0 & 2\end{pmatrix} \begin{pmatrix} 1 & 0\\ 0 & 3\end{pmatrix} \begin{pmatrix}1 & 1\\ 0 & 2\end{pmatrix}^{-1}=\begin{pmatrix} 1 & 1\\ 0 & 3\end{pmatrix},
\end{equation*}

\begin{equation*}
 \begin{pmatrix} 1 & 2 \\ 0 & 1\end{pmatrix} \begin{pmatrix} 1 & 0\\ 0 & 2\end{pmatrix} \begin{pmatrix}1 & 2\\ 0 & 1\end{pmatrix}^{-1}=\begin{pmatrix} 1 & 2\\ 0 & 2\end{pmatrix}, \quad
 \begin{pmatrix} 1 & 3 \\ 0 & 2\end{pmatrix} \begin{pmatrix} 1 & 0\\ 0 & 3\end{pmatrix} \begin{pmatrix}1 & 3\\ 0 & 2\end{pmatrix}^{-1}=\begin{pmatrix} 1 & 3\\ 0 & 3\end{pmatrix}.
\end{equation*}

By choosing a basis, we can assume that $\Gal(\mathbb{Q}(E[5])/L)$ is one of them.
If there exists a point of $5$-torsion over $K$, the only possibility is that $\Gal(\mathbb{Q}(E[5])/K)$, which is a group of order $20$, is the union of all elements whose upper left entry is $1$. This can be seen by looking at the list of elements of the group 5B.4.1; the only 20 elements that fix the same element of $\mathbb{F}_5^2$ are these.

But in that case, $\Gal(\mathbb{Q}(E[5])/L)\subset \Gal(\mathbb{Q}(E[5])/K)$, and therefore $K\subset L$. Since we have seen that $K$ and $L$ are linearly disjoint over $\mathbb{Q}$, this is a contradiction.
\end{itemize}

\ 

\noindent \fbox{\textbf{Case 5.2.II:} $\ell=7$.} Again, \cite{Zywina} studies this case and shows that $\overline{\rho}_{E, 7}(G_{\mathbb{Q}})$ is conjugate in $\GL_2(\mathbb{F}_7)$ to a group from a list of 16 possible groups (see \cite[Theorem 1.5]{Zywina}). Sutherland studies these groups and determines in each case the index of the largest subgroup that fixes a nonzero vector in $\mathbb{F}_{7}^2$; this quantity coincides with the degree of the minimal extension $K/\mathbb{Q}$ such that $E$ has a rational point of $7$-torsion. 

In \cite[Table 3, p.65]{Sutherland} we can find the list of these 16 groups along with these indices. There are only two of them where this degree is $2$, specifically those labeled as 7B.1.6, 7B.6.1. Let us examine each of these cases:

\begin{itemize}
    \item 7B.1.6. It is a group of order 42, generated by the matrices
    \begin{equation*}
        \left\{ \begin{pmatrix} 6 & 0\\ 0 & 4\end{pmatrix}, \begin{pmatrix} 1 & 1\\ 0 & 1\end{pmatrix} \right\}.
    \end{equation*}
    It can be verified that no cyclic subgroup of order 6 fixes a nonzero element of $\mathbb{F}_7^2$. Therefore, $E$ cannot have good reduction at $\ell=7$. 
    
    \item 7B.6.1. It is a group of order 84, generated by 
    \begin{equation*}
        \left\{ \begin{pmatrix} 6 & 0\\ 0 & 6\end{pmatrix},  \begin{pmatrix} 1 & 0\\ 0 & 3\end{pmatrix},\begin{pmatrix} 1 & 1\\ 0 & 1\end{pmatrix} \right\}.
    \end{equation*}
    Again, we can calculate which cyclic subgroups of order 6 fix a point. We obtain the following:

 \begin{equation*}
\left\{
\begin{pmatrix} 1 &  0\\ 0 & 3\end{pmatrix}, \begin{pmatrix} 1 &  0\\ 0 & 2\end{pmatrix},  \begin{pmatrix} 1 &  0 \\  0 & 6\end{pmatrix}, \begin{pmatrix} 1 &  0\\ 0 & 4\end{pmatrix}, \begin{pmatrix}  1 & 0\\ 0 & 5\end{pmatrix}, \begin{pmatrix} 1 & 0\\ 0 & 1\end{pmatrix} \right\},
 \end{equation*}

 \begin{equation*}
\left\{
\begin{pmatrix} 1 &  1\\ 0 & 3\end{pmatrix}, \begin{pmatrix} 1 &  4\\ 0 & 2\end{pmatrix},  \begin{pmatrix} 1 &  6 \\  0 & 6\end{pmatrix}, \begin{pmatrix} 1 &  5\\ 0 & 4\end{pmatrix}, \begin{pmatrix}  1 & 2\\ 0 & 5\end{pmatrix}, \begin{pmatrix} 1 & 0\\ 0 & 1\end{pmatrix} \right\},
 \end{equation*} 
 
 \begin{equation*}
\left\{
\begin{pmatrix} 1 &  3\\ 0 & 3\end{pmatrix}, \begin{pmatrix} 1 &  5\\ 0 & 2\end{pmatrix},  \begin{pmatrix} 1 &  4 \\  0 & 6\end{pmatrix}, \begin{pmatrix} 1 &  1\\ 0 & 4\end{pmatrix}, \begin{pmatrix}  1 & 6\\ 0 & 5\end{pmatrix}, \begin{pmatrix} 1 & 0\\ 0 & 1\end{pmatrix} \right\},
 \end{equation*} 

 \begin{equation*}
\left\{
\begin{pmatrix} 1 &  2\\ 0 & 3\end{pmatrix}, \begin{pmatrix} 1 &  1\\ 0 & 2\end{pmatrix},  \begin{pmatrix} 1 &  5 \\  0 & 6\end{pmatrix}, \begin{pmatrix} 1 &  3\\ 0 & 4\end{pmatrix}, \begin{pmatrix}  1 & 4\\ 0 & 5\end{pmatrix}, \begin{pmatrix} 1 & 0\\ 0 & 1\end{pmatrix} \right\},
 \end{equation*}

 \begin{equation*}
\left\{
\begin{pmatrix} 1 &  4\\ 0 & 3\end{pmatrix}, \begin{pmatrix} 1 &  2\\ 0 & 2\end{pmatrix},  \begin{pmatrix} 1 &  3 \\  0 & 6\end{pmatrix}, \begin{pmatrix} 1 &  6\\ 0 & 4\end{pmatrix}, \begin{pmatrix}  1 & 1\\ 0 & 5\end{pmatrix}, \begin{pmatrix} 1 & 0\\ 0 & 1\end{pmatrix} \right\},
 \end{equation*}  
 
 \begin{equation*}
\left\{
\begin{pmatrix} 1 &  5\\ 0 & 3\end{pmatrix}, \begin{pmatrix} 1 &  6\\ 0 & 2\end{pmatrix},  \begin{pmatrix} 1 &  2 \\  0 & 6\end{pmatrix}, \begin{pmatrix} 1 &  4\\ 0 & 4\end{pmatrix}, \begin{pmatrix}  1 & 3\\ 0 & 5\end{pmatrix}, \begin{pmatrix} 1 & 0\\ 0 & 1\end{pmatrix} \right\},
 \end{equation*} 

 \begin{equation*}
\left\{
\begin{pmatrix} 1 &  6\\ 0 & 3\end{pmatrix}, \begin{pmatrix} 1 &  3\\ 0 & 2\end{pmatrix},  \begin{pmatrix} 1 &  1 \\  0 & 6\end{pmatrix}, \begin{pmatrix} 1 &  2\\ 0 & 4\end{pmatrix}, \begin{pmatrix}  1 & 5\\ 0 & 5\end{pmatrix}, \begin{pmatrix} 1 & 0\\ 0 & 1\end{pmatrix} \right\}.
 \end{equation*} 

It is easy to check that they are all conjugate, and in any case $\Gal(\mathbb{Q}(E[7])/L)$ must be one of them (once we have chosen a basis).

Looking at the list of elements of the group 7B.6.1, we again come to the conclusion that if $\Gal(\mathbb{Q}(E[7])/K)$ is a subgroup of order $42$ that fixes an element, it must necessarily consist of all the elements that have a $1$ in the upper left entry. But then $\Gal(\mathbb{Q}(E[7])/L)\subset \Gal(\mathbb{Q}(E[7])/K)$ and therefore $K\subset L$, which is not possible because they are linearly disjoint over $\mathbb{Q}$.
\end{itemize}

\subsection{Supersingular reduction} In this case, the image by $\overline{\rho}_{E, \ell}$ of the inertia group $I_{\ell}$ is an non-split Cartan subgroup (cf.~\cite[Section 1.9]{Serre}). Consider a matrix 
$$
\begin{pmatrix} a & b\varepsilon\\ b & a\end{pmatrix}
$$ 
in this group, where $\varepsilon$ is a non-quadratic residue modulo $\ell$. Then, if $(x, y)^t$ is a fixed point of this matrix, different from $(0, 0)^t$, the following system of equations holds:
\begin{equation*}
    (a-1) x + b\varepsilon y = 0, \quad bx + (a-1) y = 0.
\end{equation*}
If $a=1$ and $b\neq 0$, then $b\varepsilon y=0$ and $bx=0$, which implies $x=y=0$, contradicting the assumption that $(x, y)^t\not=(0, 0)^t$. If $a\neq 1$, then $y=-bx/(a-1)$, and substituting into the other equation, we have
$$ 
x \Big((a-1)^2 -b^2\varepsilon \Big)=0.
$$ 
Since $\varepsilon$ is not a quadratic residue modulo $\ell$, $(a-1)^2 - b^2\varepsilon\neq 0$,  
thus $x=0$, and hence $y=0$. 

In other words, these matrices (with the exception of the identity matrix) do not have fixed points different from $(0, 0)^t$. Since there is at least one such matrix  in any subgroup of index $2$, we conclude that $E(K)$ does not contain points from $E[\ell]$. Note that there cannot be rational points of $\ell$-torsion either.

\subsection{Main result for $\ell=5,7$.} This last argument finishes the proof of Theorem \ref{l57}. However, our techniques allow us to prove a stronger version of this result, as stated in the introduction (and including some additional information on $\ell=3$).

\
    
\noindent \textbf{Theorem \ref{addred}.} 
\textit{Let $E/\mathbb{Q}$ be an elliptic curve. Assume $K$ is a quadratic number field such that there exists $P\in E(K)[\ell]\setminus E(\mathbb{Q})$, with $\ell\geq 3$ prime.  
    \begin{enumerate}
        \item For every prime $p\neq \ell$ that ramifies in $K$, $E$ has additive reduction at $p$, i.e. $p^2|N_E$.
        \item If $p=\ell>3$  ramifies in $K$, $E$ has additive reduction at $p$, i.e. $p^2|N_E$.
    \end{enumerate}}

\begin{proof}
    By hypothesis, 
$$
\mathbb{Q} \subsetneq K \cap \mathbb{Q}(E[\ell]) \subset K \quad \text{and} \quad [K:\mathbb{Q}] = 2.
$$ 
The existence of $P$ implies that $K\cap \mathbb{Q}(E[\ell])\not=\mathbb{Q}$, thus $K \subset \mathbb{Q}(E[\ell])$, which implies that $p$ ramifies in $\mathbb{Q}(E[\ell])$. 

Let us prove the first part of the theorem. We have that, by hypothesis, $p\not=\ell$. From the Néron–Ogg–Shafarevich criterion (see \cite[VII, Theorem 7.1]{Silverman}), we can conclude that $E$ has bad reduction at $p$. By way of contradiction, suppose that $E$ has multiplicative reduction at $p$.  

\ 

Let $\mathfrak{P}|p$ be a prime above $p$ in $\mathbb{Q}(E[\ell])$. Let $I(\mathfrak{P}|p)$ denote the inertia group associated with this prime, and let 
$$
L = \mathbb{Q}(E[\ell])^{I(\mathfrak{P}|p)}
$$ 
be the maximal unramified subextension at $p$ of $\mathbb{Q}(E[l])|\mathbb{Q}$. Since $p$ ramifies in $K$, we have the following diagram:
\[
\begin{tikzcd}
                                                                      & {\mathbb{Q}(E[\ell])} \arrow[rddd, no head]                  &                        \\
                                                                      & LK \arrow[u, no head]                                     &                        \\
L \arrow[ru, "2"', no head] \arrow[ruu, "I(\mathfrak{P}|p)", no head] &                                                           &                        \\
                                                                      &                                                           & K \arrow[luu, no head] \\
                                                                      & \mathbb{Q} \arrow[luu, no head] \arrow[ru, "2"', no head] &                       
\end{tikzcd}
\]

By Proposition \ref{prop:12}, $I(\mathfrak{P}|p)$ has order $\ell\geq 3$. Since $\ell > 2$, the diagram yields a contradiction. 

The second part of the theorem was already proven in this section.
\end{proof}

\section{Conclusion and final remarks}

Merging together Proposition \ref{prop:2}, Theorem \ref{l2} (proven in Sections \ref{sec:2_strict} and \ref{sec:2_mixed}), and Theorem \ref{l57} (proven in Section \ref{sec:l57}), we can state the following result:

\begin{theorem}
    Let $E/\mathbb{Q}$ be an elliptic curve with conductor $N_E$ and $K = \mathbb{Q} (\sqrt{d})$ a quadratic number field with $E_\tors(\mathbb{Q}) \neq E_\tors(K)$. Then if $p \in \mathbb{Z}$ is a prime such that $p|d$, then either $p|N_E$ or $p=3$. 
\end{theorem}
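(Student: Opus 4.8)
The proof assembles the results of Sections~2--5 into a single case analysis, organised by the prime responsible for the growth of torsion. Fix a prime $p\mid d$; as $d$ is square-free, $p$ ramifies in $K$, and by Proposition~2 either $p\mid N_E$ or $p\in\{2,3,5,7\}$. Assume, for a contradiction, that $p\nmid N_E$ and $p\neq 3$, so that $p\in\{2,5,7\}$ and $E$ has good reduction at $p$. Choose $P\in E_\tors(K)\setminus E_\tors(\mathbb{Q})$. If $(E,K)$ falls in the strict case, then, after replacing $P$ by a suitable multiple, we may assume $o(P)=\ell$ is a prime in $\{2,3,5,7\}$ (by \cite{KenMom,Kamienny}), still with $P\notin E(\mathbb{Q})$; if $(E,K)$ falls in the mixed case, then by \cite[Thm.~2]{GonTorI} and the reduction recalled at the beginning of Section~3 we may instead assume $o(P)=N\in\{4,8,16\}$ with $[2]P\in E(\mathbb{Q})$. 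Put $M=\ell$ in the first case and $M=N$ in the second. In either case $P\in E[M]\cap E(K)$, so the coordinates of $P$ generate a subfield of $K\cap\mathbb{Q}(E[M])$ which is nontrivial (as $P\notin E(\mathbb{Q})$) and contained in the quadratic field $K$; hence it equals $K$, and therefore $K\subseteq\mathbb{Q}(E[M])$.

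Since $p$ ramifies in $K$ it ramifies in $\mathbb{Q}(E[M])$; but $\mathbb{Q}(E[M])/\mathbb{Q}$ is unramified outside the primes dividing $M\cdot N_E$ by the N\'eron--Ogg--Shafarevich criterion, and $p\nmid N_E$, so $p\mid M$. In the mixed case $M\in\{4,8,16\}$ is a power of $2$, so $p=2$, and Proposition~\ref{lem:N=4} (for $N=4$), the analogous Proposition for $N=8$ (Section~3), and Proposition~\ref{caso N=16} (for $N=16$) each show that such a $P$ forces $E$ to have bad reduction at $2$ --- contradicting $p\nmid N_E$. In the strict case $M=\ell$, so $p=\ell$, and since $p\neq 3$ we have $\ell\in\{2,5,7\}$. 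If $\ell=2$, the Proposition of Section~2 derives from $2\mid d$ that $E$ has bad reduction at $2$, a contradiction. If $\ell\in\{5,7\}$, then (invoking \cite{GJ} when $E$ has complex multiplication, and Section~5 otherwise) one uses Serre's description of $\overline{\rho}_{E,\ell}(I_\ell)$ in the supersingular and wildly ramified ordinary cases, and the Zywina--Sutherland classification of $\overline{\rho}_{E,\ell}(G_{\mathbb{Q}})$ in the tamely ramified ordinary case, to conclude that $E$ has no nonzero $\ell$-torsion point over a quadratic field ramified at $\ell$; since $K$ is such a field and $P\in E(K)[\ell]\setminus\{\mathcal{O}\}$, this is a contradiction. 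Every case being contradictory, the assumption $p\nmid N_E$ and $p\neq 3$ is untenable, which proves the theorem; in the surviving situation $p=3$ with good reduction at $3$ the growth is moreover described precisely by Proposition~\ref{caso N=3}.

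As for the distribution of effort: all the genuine computation --- the $2$-adic valuation estimates of Sections~2 and~3 and the inertia and subgroup computations of Section~5 --- has already been discharged, so this final step is essentially bookkeeping. The one point that deserves attention is the reduction in the mixed case, i.e.\ the passage (via \cite[Thm.~2]{GonTorI}) from an arbitrary new torsion point to one of $2$-power order $N\in\{4,8,16\}$ with rational double; one should check that this reduction never produces a point already defined over $\mathbb{Q}$ and that the list of possible $N$ is exhaustive. I expect, as throughout the paper, the heaviest single ingredient to be the case $N=8$ of the mixed $\ell=2$ analysis.
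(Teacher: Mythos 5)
Your proposal is correct and follows essentially the same route as the paper: the introductory N\'eron--Ogg--Shafarevich argument pins the prime $p\mid d$ to the order of the new torsion point, and the strict/mixed case split then hands everything off to the propositions of Sections~2, 3 and~5 (with $\ell=3$ surviving as the stated exception). Your explicit derivation of $p\mid M$ from $K\subseteq\mathbb{Q}(E[M])$ is a welcome clarification of a step the paper leaves implicit, but it is not a different argument.
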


For $p>3$ (that is, Theorem \ref{l57}), this theorem has already been proved with different techniques and in a more general context by Mentzelos Melistas (see \cite[Theorem 1.5]{Melistas}). 

Also, when $p>2$, the condition that $E$ has a $p$-torsion point in $E(K)\setminus E(\mathbb{Q})$, where $K=\mathbb{Q}(\sqrt{d})$, is equivalent to the condition that the quadratic twist $E_d$ has a rational $p$-torsion point. As one of the referees pointed out, one could try to prove Theorem 1 for $p>3$ relying on this fact. And, indeed, by makes use of some results of Olson \cite{Olson}, one can find an alternative proof in this case.

\

However, for $p>3$, the techniques used in the alternative proof given in Section 5 allowed us to prove Theorem \ref{addred}, which is a stronger result.

\ 

While exploring the distinctive case $\ell = 3$ we have also been able to offer some interesting remarks which give us a fuller picture of the phenomenon. Mainly:

\

\noindent \textbf{Proposition \ref{caso N=3}.}
\textit{Let $E/\mathbb{Q}$ be an elliptic curve and $K=\mathbb{Q}(\sqrt{d})$ be a quadratic extension such that there exists a point $P\in E[3]$ satisfying that $P\in E(K)\setminus E(\mathbb{Q})$. Let us assume $3|d$ and $E$ has good reduction at 3, then}
$$
E_\tors(K) =   E_\tors(\mathbb{Q})\times \mathcal{C}_3.
$$

\

As a final note, we must underscore the fact that the ultimate problem of shortlisting the quadratic fields where the torsion grows in terms of invariants (of the curve and the quadratic field alike) still should admit many improvements. 

In this sense, our main result is just a step in the direction of sieving the set of suitable quadratic extensions and future work by the authors is already in progress concerning these matters.

\section{Appendix: Matrix groups appearing in Section \ref{sec:l57}}

\begin{itemize}

\item Group 5B.1.4:

$$
\left\{ 
\begin{pmatrix} 1 & 0\\ 0 & 1\end{pmatrix}, \;
\begin{pmatrix} 1 & 0\\ 0 & 4\end{pmatrix}, \;
\begin{pmatrix} 1 & 1\\ 0 & 1\end{pmatrix}, \;
\begin{pmatrix} 1 & 1\\ 0 & 4\end{pmatrix}, \;
\begin{pmatrix} 1 & 2\\ 0 & 1\end{pmatrix}, \;
\begin{pmatrix} 1 & 2\\ 0 & 4\end{pmatrix}, \;
\begin{pmatrix} 1 & 3\\ 0 & 1\end{pmatrix}, \;
\begin{pmatrix} 1 & 3\\ 0 & 4\end{pmatrix}, \right.
$$
$$
\begin{pmatrix} 1 & 4\\ 0 & 1\end{pmatrix}, \;
\begin{pmatrix} 1 & 4\\ 0 & 4\end{pmatrix}, \;
\begin{pmatrix} 4 & 0\\ 0 & 2\end{pmatrix}, \; 
\begin{pmatrix} 4 & 0\\ 0 & 3\end{pmatrix}, \; 
\begin{pmatrix} 4 & 1\\ 0 & 2\end{pmatrix}, \; 
\begin{pmatrix} 4 & 1\\ 0 & 3\end{pmatrix}, \; 
\begin{pmatrix} 4 & 2\\ 0 & 2\end{pmatrix}, \; 
\begin{pmatrix} 4 & 2\\ 0 & 3\end{pmatrix},
$$
$$
\left. \begin{pmatrix} 4 & 3\\ 0 & 2\end{pmatrix}, \
\begin{pmatrix} 4 & 3\\ 0 & 3\end{pmatrix}, \;
\begin{pmatrix} 4 & 4\\ 0 & 2\end{pmatrix}, \;
\begin{pmatrix} 4 & 4\\ 0 & 3\end{pmatrix} \right\}.
$$

\ 

\item  Group 5B.4.1:

$$
\left\{
\begin{pmatrix} 1 & 0\\ 0 & 1\end{pmatrix}, \; 
\begin{pmatrix} 1 & 0\\ 0 & 2\end{pmatrix}, \;
\begin{pmatrix} 1 & 0\\ 0 & 3\end{pmatrix}, \;
\begin{pmatrix} 1 & 0\\ 0 & 4\end{pmatrix}, \;
\begin{pmatrix} 1 & 1\\ 0 & 1\end{pmatrix}, \;
\begin{pmatrix} 1 & 1\\ 0 & 2\end{pmatrix}, \;
\begin{pmatrix} 1 & 1\\ 0 & 3\end{pmatrix}, \;
\begin{pmatrix} 1 & 1\\ 0 & 4\end{pmatrix}, \right. 
$$
$$
\begin{pmatrix} 1 & 2\\ 0 & 1\end{pmatrix}, \;
\begin{pmatrix} 1 & 2\\ 0 & 2\end{pmatrix}, \;
\begin{pmatrix} 1 & 2\\ 0 & 3\end{pmatrix}, \;
\begin{pmatrix} 1 & 2\\ 0 & 4\end{pmatrix}, \;
\begin{pmatrix} 1 & 3\\ 0 & 1\end{pmatrix}, \;
\begin{pmatrix} 1 & 3\\ 0 & 2\end{pmatrix}, \;
\begin{pmatrix} 1 & 3\\ 0 & 3\end{pmatrix}, \;
\begin{pmatrix} 1 & 3\\ 0 & 4\end{pmatrix},
$$
$$
\begin{pmatrix} 1 & 4\\ 0 & 1\end{pmatrix}, \;
\begin{pmatrix} 1 & 4\\ 0 & 2\end{pmatrix}, \;
\begin{pmatrix} 1 & 4\\ 0 & 3\end{pmatrix}, \; 
\begin{pmatrix} 1 & 4\\ 0 & 4\end{pmatrix}, \;
\begin{pmatrix} 4 & 0\\ 0 & 1\end{pmatrix}, \;
\begin{pmatrix} 4 & 0\\ 0 & 2\end{pmatrix}, \;
\begin{pmatrix} 4 & 0\\ 0 & 3\end{pmatrix}, \; 
\begin{pmatrix} 4 & 0\\ 0 & 4\end{pmatrix}, 
$$
$$
\begin{pmatrix} 4 & 1\\ 0 & 1\end{pmatrix}, \;
\begin{pmatrix} 4 & 1\\ 0 & 2\end{pmatrix}, \;
\begin{pmatrix} 4 & 1\\ 0 & 3\end{pmatrix}, \;
\begin{pmatrix} 4 & 1\\ 0 & 4\end{pmatrix}, \;
\begin{pmatrix} 4 & 2\\ 0 & 1\end{pmatrix}, \; 
\begin{pmatrix} 4 & 2\\ 0 & 2\end{pmatrix}, \;
\begin{pmatrix} 4 & 2\\ 0 & 3\end{pmatrix}, \;
\begin{pmatrix} 4 & 2\\ 0 & 4\end{pmatrix}, 
$$
$$
\left. \begin{pmatrix} 4 & 3\\ 0 & 1\end{pmatrix}, \; 
\begin{pmatrix} 4 & 3\\ 0 & 2\end{pmatrix}, \;
\begin{pmatrix} 4 & 3\\ 0 & 3\end{pmatrix}, \;
\begin{pmatrix} 4 & 3\\ 0 & 4\end{pmatrix}, \;
\begin{pmatrix} 4 & 4\\ 0 & 1\end{pmatrix}, \;
\begin{pmatrix} 4 & 4\\ 0 & 2\end{pmatrix}, \;
\begin{pmatrix} 4 & 4\\ 0 & 3\end{pmatrix}, \; 
\begin{pmatrix} 4 & 4\\ 0 & 4\end{pmatrix} 
\right\}.
$$

\ 

\item  Group 7B.1.6:

$$
\left\{
\begin{pmatrix} 1 & 0\\ 0 & 1\end{pmatrix}, \;
\begin{pmatrix} 1 & 0\\ 0 & 2\end{pmatrix}, \;
\begin{pmatrix} 1 & 0\\ 0 & 4\end{pmatrix}, \;
\begin{pmatrix} 1 & 1\\ 0 & 1\end{pmatrix}, \;
\begin{pmatrix} 1 & 1\\ 0 & 2\end{pmatrix}, \;
\begin{pmatrix} 1 & 1\\ 0 & 4\end{pmatrix}, \;
\begin{pmatrix} 1 & 2\\ 0 & 1\end{pmatrix}, \;
\begin{pmatrix} 1 & 2\\ 0 & 2\end{pmatrix}, \right.
$$
$$
\begin{pmatrix} 1 & 2\\ 0 & 4\end{pmatrix}, \;
\begin{pmatrix} 1 & 3\\ 0 & 1\end{pmatrix}, \;
\begin{pmatrix} 1 & 3\\ 0 & 2\end{pmatrix}, \;
\begin{pmatrix} 1 & 3\\ 0 & 4\end{pmatrix}, \;
\begin{pmatrix} 1 & 4\\ 0 & 1\end{pmatrix}, \;
\begin{pmatrix} 1 & 4\\ 0 & 2\end{pmatrix}, \; 
\begin{pmatrix} 1 & 4\\ 0 & 4\end{pmatrix}, \;
\begin{pmatrix} 1 & 5\\ 0 & 1\end{pmatrix},
$$
$$
\begin{pmatrix} 1 & 5\\ 0 & 2\end{pmatrix}, \;
\begin{pmatrix} 1 & 5\\ 0 & 4\end{pmatrix}, \;
\begin{pmatrix} 1 & 6\\ 0 & 1\end{pmatrix}, \;
\begin{pmatrix} 1 & 6\\ 0 & 2\end{pmatrix}, \;
\begin{pmatrix} 1 & 6\\ 0 & 4\end{pmatrix}, \;
\begin{pmatrix} 6 & 0\\ 0 & 1\end{pmatrix}, \;
\begin{pmatrix} 6 & 0\\ 0 & 2\end{pmatrix}, \;
\begin{pmatrix} 6 & 0\\ 0 & 4\end{pmatrix},
$$
$$
\begin{pmatrix} 6 & 1\\ 0 & 1\end{pmatrix}, \;
\begin{pmatrix} 6 & 1\\ 0 & 2\end{pmatrix}, \;
\begin{pmatrix} 6 & 1\\ 0 & 4\end{pmatrix}, \;
\begin{pmatrix} 6 & 2\\ 0 & 1\end{pmatrix}, \;
\begin{pmatrix} 6 & 2\\ 0 & 2\end{pmatrix}, \;
\begin{pmatrix} 6 & 2\\ 0 & 4\end{pmatrix}, \;
\begin{pmatrix} 6 & 3\\ 0 & 1\end{pmatrix}, \;
\begin{pmatrix} 6 & 3\\ 0 & 2\end{pmatrix},
$$
$$
\begin{pmatrix} 6 & 3\\ 0 & 4\end{pmatrix}, \;
\begin{pmatrix} 6 & 4\\ 0 & 1\end{pmatrix}, \;
\begin{pmatrix} 6 & 4\\ 0 & 2\end{pmatrix}, \;
\begin{pmatrix} 6 & 4\\ 0 & 4\end{pmatrix}, \;
\begin{pmatrix} 6 & 5\\ 0 & 1\end{pmatrix}, \;
\begin{pmatrix} 6 & 5\\ 0 & 2\end{pmatrix}, \;
\begin{pmatrix} 6 & 5\\ 0 & 4\end{pmatrix}, \;
\begin{pmatrix} 6 & 6\\ 0 & 1\end{pmatrix}, \;
$$
$$
\left. \begin{pmatrix} 6 & 6\\ 0 & 2\end{pmatrix}, \;
\begin{pmatrix} 6 & 6\\ 0 & 4\end{pmatrix} \right\}.
$$

\ 

\item Group 7B.6.1:

$$
\left\{ 
\begin{pmatrix} 1 & 0\\ 0 & 1\end{pmatrix}, \;
\begin{pmatrix} 1 & 0\\ 0 & 2\end{pmatrix}, \;
\begin{pmatrix} 1 & 0\\ 0 & 3\end{pmatrix}, \;
\begin{pmatrix} 1 & 0\\ 0 & 4\end{pmatrix}, \;
\begin{pmatrix} 1 & 0\\ 0 & 5\end{pmatrix}, \;
\begin{pmatrix} 1 & 0\\ 0 & 6\end{pmatrix}, \;
\begin{pmatrix} 1 & 1\\ 0 & 1\end{pmatrix}, \;
\begin{pmatrix} 1 & 1\\ 0 & 2\end{pmatrix}, \right.
$$
$$
\begin{pmatrix} 1 & 1\\ 0 & 3\end{pmatrix}, \;
\begin{pmatrix} 1 & 1\\ 0 & 4\end{pmatrix}, \;
\begin{pmatrix} 1 & 1\\ 0 & 5\end{pmatrix}, \;
\begin{pmatrix} 1 & 1\\ 0 & 6\end{pmatrix}, \;
\begin{pmatrix} 1 & 2\\ 0 & 1\end{pmatrix}, \;
\begin{pmatrix} 1 & 2\\ 0 & 2\end{pmatrix}, \;
\begin{pmatrix} 1 & 2\\ 0 & 3\end{pmatrix}, \;
\begin{pmatrix} 1 & 2\\ 0 & 4\end{pmatrix},
$$
$$
\begin{pmatrix} 1 & 2\\ 0 & 5\end{pmatrix}, \;
\begin{pmatrix} 1 & 2\\ 0 & 6\end{pmatrix}, \;
\begin{pmatrix} 1 & 3\\ 0 & 1\end{pmatrix}, \;
\begin{pmatrix} 1 & 3\\ 0 & 2\end{pmatrix}, \;
\begin{pmatrix} 1 & 3\\ 0 & 3\end{pmatrix}, \;
\begin{pmatrix} 1 & 3\\ 0 & 4\end{pmatrix}, \;
\begin{pmatrix} 1 & 3\\ 0 & 5\end{pmatrix}, \;
\begin{pmatrix} 1 & 3\\ 0 & 6\end{pmatrix},
$$
$$
\begin{pmatrix} 1 & 4\\ 0 & 1\end{pmatrix}, \;
\begin{pmatrix} 1 & 4\\ 0 & 2\end{pmatrix}, \;
\begin{pmatrix} 1 & 4\\ 0 & 3\end{pmatrix}, \;
\begin{pmatrix} 1 & 4\\ 0 & 4\end{pmatrix}, \;
\begin{pmatrix} 1 & 4\\ 0 & 5\end{pmatrix}, \;
\begin{pmatrix} 1 & 4\\ 0 & 6\end{pmatrix}, \;
\begin{pmatrix} 1 & 5\\ 0 & 1\end{pmatrix}, \;
\begin{pmatrix} 1 & 5\\ 0 & 2\end{pmatrix},
$$
$$
\begin{pmatrix} 1 & 5\\ 0 & 3\end{pmatrix}, \;
\begin{pmatrix} 1 & 5\\ 0 & 4\end{pmatrix}, \;
\begin{pmatrix} 1 & 5\\ 0 & 5\end{pmatrix}, \;
\begin{pmatrix} 1 & 5\\ 0 & 6\end{pmatrix}, \;
\begin{pmatrix} 1 & 6\\ 0 & 1\end{pmatrix}, \;
\begin{pmatrix} 1 & 6\\ 0 & 2\end{pmatrix}, \;
\begin{pmatrix} 1 & 6\\ 0 & 3\end{pmatrix}, \;
\begin{pmatrix} 1 & 6\\ 0 & 4\end{pmatrix},
$$
$$
\begin{pmatrix} 1 & 6\\ 0 & 5\end{pmatrix}, \;
\begin{pmatrix} 1 & 6\\ 0 & 6\end{pmatrix}, \;
\begin{pmatrix} 6 & 0\\ 0 & 1\end{pmatrix}, \;
\begin{pmatrix} 6 & 0\\ 0 & 2\end{pmatrix}, \;
\begin{pmatrix} 6 & 0\\ 0 & 3\end{pmatrix}, \;
\begin{pmatrix} 6 & 0\\ 0 & 4\end{pmatrix}, \;
\begin{pmatrix} 6 & 0\\ 0 & 5\end{pmatrix}, \;
\begin{pmatrix} 6 & 0\\ 0 & 6\end{pmatrix},
$$
$$
\begin{pmatrix} 6 & 1\\ 0 & 1\end{pmatrix}, \;
\begin{pmatrix} 6 & 1\\ 0 & 2\end{pmatrix}, \;
\begin{pmatrix} 6 & 1\\ 0 & 3\end{pmatrix}, \;
\begin{pmatrix} 6 & 1\\ 0 & 4\end{pmatrix}, \;
\begin{pmatrix} 6 & 1\\ 0 & 5\end{pmatrix}, \;
\begin{pmatrix} 6 & 1\\ 0 & 6\end{pmatrix}, \;
\begin{pmatrix} 6 & 2\\ 0 & 1\end{pmatrix}, \;
\begin{pmatrix} 6 & 2\\ 0 & 2\end{pmatrix},
$$
$$
\begin{pmatrix} 6 & 2\\ 0 & 3\end{pmatrix}, \;
\begin{pmatrix} 6 & 2\\ 0 & 4\end{pmatrix}, \;
\begin{pmatrix} 6 & 2\\ 0 & 5\end{pmatrix}, \;
\begin{pmatrix} 6 & 2\\ 0 & 6\end{pmatrix}, \;
\begin{pmatrix} 6 & 3\\ 0 & 1\end{pmatrix}, \;
\begin{pmatrix} 6 & 3\\ 0 & 2\end{pmatrix}, \; 
\begin{pmatrix} 6 & 3\\ 0 & 3\end{pmatrix}, \;
\begin{pmatrix} 6 & 3\\ 0 & 4\end{pmatrix},
$$
$$
\begin{pmatrix} 6 & 3\\ 0 & 5\end{pmatrix}, \;
\begin{pmatrix} 6 & 3\\ 0 & 6\end{pmatrix}, \;
\begin{pmatrix} 6 & 4\\ 0 & 1\end{pmatrix}, \;
\begin{pmatrix} 6 & 4\\ 0 & 2\end{pmatrix}, \;
\begin{pmatrix} 6 & 4\\ 0 & 3\end{pmatrix}, \;
\begin{pmatrix} 6 & 4\\ 0 & 4\end{pmatrix}, \;
\begin{pmatrix} 6 & 4\\ 0 & 5\end{pmatrix}, \;
\begin{pmatrix} 6 & 4\\ 0 & 6\end{pmatrix}, 
$$
$$
\begin{pmatrix} 6 & 5\\ 0 & 1\end{pmatrix}, \;
\begin{pmatrix} 6 & 5\\ 0 & 2\end{pmatrix}, \;
\begin{pmatrix} 6 & 5\\ 0 & 3\end{pmatrix}, \;
\begin{pmatrix} 6 & 5\\ 0 & 4\end{pmatrix}, \;
\begin{pmatrix} 6 & 5\\ 0 & 5\end{pmatrix}, \;
\begin{pmatrix} 6 & 5\\ 0 & 6\end{pmatrix}, \;
\begin{pmatrix} 6 & 6\\ 0 & 1\end{pmatrix}, \;
\begin{pmatrix} 6 & 6\\ 0 & 2\end{pmatrix},
$$
$$
\left. \begin{pmatrix} 6 & 6\\ 0 & 3\end{pmatrix}, \;
\begin{pmatrix} 6 & 6\\ 0 & 4\end{pmatrix}, \;
\begin{pmatrix} 6 & 6\\ 0 & 5\end{pmatrix}, \;
\begin{pmatrix} 6 & 6\\ 0 & 6\end{pmatrix} \right\}.
$$
\end{itemize}

\noindent \textbf{Conflict of Interest:} Not Applicable.

\bibliographystyle{amsplain}
\bibliography{QuadTor}

\end{document}